\newcommand{\Z}{\mathbb{Z}}
\newcommand{\C}{\mathbb{C}}
\newcommand{\N}{\mathbb{N}}
\newcommand{\R}{\mathbb{R}}
\renewcommand{\i}{\mathfrak{i}}
\renewcommand{\Re}{\mathfrak{Re}}
\newcommand{\Res}{\mathrm{Res}}
\renewcommand{\d}{\mathrm{d}}
\renewcommand{\det}{\mathrm{det}}
\newcommand{\E}{\mathbf{E}}
\renewcommand{\deg}{\mathrm{deg\,}}
\newcommand{\adj}{\mathrm{adj}}
\theoremstyle{plain}
\newtheorem{lemma}{Lemma}
\newtheorem{theorem}{Theorem}
\newtheorem{proposition}{Proposition}
\newtheorem{corollary}{Corollary}
\theoremstyle{remark}
\begin{document}

\title{High-frequency sampling of multivariate CARMA processes}

\author{P\'eter Kevei\thanks{This research was funded by a postdoctoral fellowship
of the Alexander von Humboldt Foundation.} \\
Center for Mathematical Sciences, Technische Universit\"at M\"unchen \\
Boltzmannstra{\ss }e 3, 85748 Garching, Germany \\
\texttt{peter.kevei@tum.de}}

\date{}

\maketitle

\begin{abstract}
High-frequency sampled multivariate continuous time autoregressive moving average
processes are investigated. We obtain asymptotic expansion for the spectral density
of the sampled MCARMA process $(Y_{n\Delta})_{n \in \Z}$ as $\Delta \downarrow 0$, where
$(Y_t)_{t \in \R}$ is an MCARMA process. We show that the properly filtered process
is a vector moving average process, and determine the asymptotic moving average
representation of it, thus  generalizing the results by Brockwell et
al.~\cite{BFK1, BFK2} in the univariate case to the multivariate model.
The determination of the moving average representation 
of the filtered process, important for the analysis of 
high-frequency data, is difficult for any fixed positive $\Delta$. 
However, the results established here 
provide a useful and insightful approximation when $\Delta$ is very small.
\\
\noindent \textit{Keywords:} Multivariate continuous time autoregressive moving average 
(CARMA) process;
spectral density; high-frequency sampling; discretely sampled process. \\
\noindent \textit{AMS 2000 Subject Classification:} Primary 60G12, 62M10, 62M15; \\
\noindent \phantom{\textit{AMS 2000 Subject Classification:\,}} Secondary 60G10, 60G25,
60G35.
\end{abstract}

\section{Introduction}

The main object of this paper is the multivariate continuous time autoregressive 
moving average process 
(MCARMA) in $d$ dimension, which we define as follows. Let
\[
\begin{split}
P(z) & = I_d z^p + A_1 z^{p-1} + \ldots + A_p, \\
Q(z) & = B_0 z^q + B_1 z^{q-1} + \ldots + B_q
\end{split}
\]
be the autoregressive and moving average polynomial respectively, $A_i, B_j \in M_d$,
and $I_d$ is the $d$-dimensional identity matrix.
The set of $m \times n$ real matrices is denoted by $M_{m,n}$, and $M_n$ for $m=n$.
The driving process is a two sided $d$-dimensional L\'evy process $(L_t)_t \in \R$, that 
is
\[
L_t =
\begin{cases}
L_1(t), & t \geq 0, \\
-L_2(-t-), & t < 0,
\end{cases}
\]
where $L_1(t), L_2(t)$, $t \geq 0$, are independent identically distributed (one-sided)
$d$-dimensional L\'evy processes, such that $\E L_1(1) = 0$, and 
$\E \| L_1(1) \|^2 < \infty$,
with $\| \cdot \|$ being the usual Euclidean norm.
The covariance matrix of $L_1(1)$ is $\Sigma_L$. 
For definition and properties of L\'evy processes we refer to Bertoin \cite{Bertoin}.

The continuous-time analog of discrete time ARMA equations is the differential equation
\[
P(D) Y_t = Q(D) D L_t, 
\]
with $D$ being the differential operator with respect to $t$. Since L\'evy processes are 
not differentiable, this is meant as the following state space representation.

The $d$-dimensional stochastic process $Y$ is an MCARMA process with autoregressive
and moving average polynomial $P$ and $Q$ respectively, if
\begin{equation} \label{eq:def-MCARMA}
\begin{split}
\d G(t) & = \mathcal{A} G(t) \d t + \mathcal{B} \d L_t, \\
Y_t & = \mathcal{C} G(t), \quad t \in \R,
\end{split}
\end{equation}
where
\[
\mathcal{A} =
\begin{pmatrix}
0 & I_d & 0 & \ldots & 0 \\
0 & 0 & I_d & \ldots & 0 \\
\vdots & \vdots & \vdots & \ldots & \vdots \\
0 & 0 & 0 & \ldots & I_d \\
- A_p & - A_{p-1} & - A_{p-2} & \ldots & -A_1
\end{pmatrix} \in M_{pd},
\quad
\mathcal{C} = ( I_d, 0, 0, \ldots , 0 ) \in M_{d,pd}, 
\]
and
\[
\mathcal{B} = ( \beta_1^\top, \beta_2^\top, \ldots, \beta_p^\top )^\top \in M_{pd,d}, \
\beta_{p-j}=
\begin{cases}
- \sum_{i=1}^{p-j-1} A_i \beta_{p-j-i} - B_{q-j}, & 0 \leq j \leq q, \\
0, & j > q.
\end{cases}
\]
Let $\lambda_1, \ldots, \lambda_{pd}$ denote
the eigenvalues of $\mathcal{A}$, which is the same as the of zeros of $\det P(z)$, see 
Lemma 3.8 in Marquardt and Stelzer \cite{MS}.
It is well-known (see Brockwell \cite{Brock00}) that in one dimension a stationary causal
solution exists if and only if the zeros of $\det P(z)$ have negative real parts. 
Under this condition strictly stationary causal solution of the MCARMA equation
(\ref{eq:def-MCARMA}) exists, see Schlemm and Stelzer \cite[Proposition 3.2]{SS}.
Therefore throughout the paper we assume that
\[
\text{the zeros of } \det P(z) \text{ have negative real part.} 
\]
Under this assumption the process $Y$ can be represented as a continuous time moving 
average process (\cite{MS} (3.38)--(3.39))
\begin{equation} \label{eq:def-Y}
Y_t = \int_{-\infty}^t g(t-s) \d L_s, \quad t \in \R,
\end{equation}
where
\[
g(t) = \frac{1}{2 \pi} \int_{-\infty}^\infty  e^{\i t x} P(\i x)^{-1} Q(\i x) \d x.
\]
By Lemma 3.24 in \cite{MS}, the assumptions on the eigenvalues of $\mathcal{A}$ implies
that $g$ vanishes on the negative half-line, that is our process is causal.
In this paper we use representation (\ref{eq:def-Y}). Since we are only interested in
second-order properties of the process $Y$, the integral in (\ref{eq:def-Y}) is
understood in the $L^2$-sense. For the same reason our results remain valid in a more 
general setup, when the process $(L_t)_{t \in \R}$ has stationary orthogonal increments.

These processes were introduced by Marquardt and Stelzer \cite{MS} in 2007 as a 
continuous
time analog of vector-valued ARMA processes, and multivariate extension of continuous time
ARMA models. The state space representation of these models were investigated by Schlemm 
and Stelzer \cite{SS}, in particular the definition given above is from \cite{SS}.
Gaussian CARMA processes date back to Doob \cite{Doob} in 1944, while more general 
L\'evy-driven CARMA models were introduced by Brockwell \cite{Brock01} in 2001. These 
models are important tools for  stochastic modeling and have a wide range of 
applications, e.g.~in financial mathematics, electricity markets, and in turbulence. 
Andresen et al.~\cite{Benth} used Gaussian CARMA processes to model the short and 
forward 
interest rate. It is worth to mention that the Vasicek model corresponds to the 
Ornstein--Uhlenbeck process, which is the CARMA(1,0) process. Todorov and Tauchen 
\cite{TodTau} applied
L\'evy-driven CARMA processes to model stochastic volatility in finance.
For a review on applications of L\'evy-driven time series models in finance we 
refer to Brockwell and Lindner \cite{BL2}.
For spot prices in electricity markets Benth et al.~\cite{BKMV} proposed a model with 
seasonality, consisting of a deterministic seasonality, a CARMA factor driven by a 
non-Gaussian stable L\'evy process, and a non-stationary longterm factor given by a 
L\'evy 
process. Brockwell et al.~\cite{BFK2} applied CARMA processes to model high-frequency 
sampled turbulence data.

Investigations of MCARMA models have become more active in the recent years. Ergodic and 
mixing properties of equidistantly sampled MCARMA processes were studied by Schlemm and 
Stelzer \cite{SS}. Fasen \cite{Fasen14} investigated asymptotic properties of 
high-frequency sampled models, and gave parameter estimation, while Fasen and Kimmig 
\cite{FasenKimmig} studied information criteria for MCARMA models. Recovery of the 
driving L\'evy process was treated by Brockwell and Schlemm \cite{BrockSchlemm}
and by Ferrazzano and Fuchs \cite{FerrFuchs}.

In the present paper we consider high-frequency sampling of an MCARMA process and
investigate the characteristics of the resulting process $Y_n^\Delta = Y_{n\Delta}$. As 
$\Delta \downarrow 0$ we obtain the asymptotic moving average representation of the
filtered process, thus extending the results in Brockwell et al.~\cite{BFK1}, and partly 
in \cite{BFK2} to the multivariate setup. Our results slightly generalize the
moving average decomposition of Schlemm and Stelzer \cite[Theorem 4.2]{SS}, and 
gives further information on the decomposition as $\Delta \downarrow 0$.
The determination of the moving average representation 
of the filtered process, important for the analysis of 
high-frequency data, is extremely difficult for any fixed
positive $\Delta$ even in the one-dimensional case. However, the results established here 
provide a useful and insightful approximation when $\Delta$ is very small.

In the next section we give two representations of the spectral density of the sampled 
process. The first one in Proposition \ref{prop:spect1} is a Taylor expansion in 
$\Delta$, 
which we use to prove the asymptotic moving average representation in Theorem 
\ref{th:marepr}. The second one in Proposition \ref{prop:spect2} and 
\ref{prop:spect2-gen} allows us to show that 
the filtered process is a moving average process. Section \ref{sect:ma-repr} contains the 
main result, the moving average representation of the filtered process. In Section 
\ref{sect:example} the simplest non-univariate example is spelled out in detail, showing 
the difficulties to obtain higher-order approximations as is possible in the 
one-dimensional case. 
All the proofs are gathered in the last section.

\section{Spectral density of the sampled process} \label{sect:density}

It is well-known that MCARMA processes have spectral density function, which is
\begin{equation} \label{eq:spect-densY}
f_Y(\lambda) = \frac{1}{2 \pi} P(\i \lambda)^{-1} Q(\i \lambda) \Sigma_L
Q(\i \lambda)^* (P(\i \lambda)^{-1})^*, \quad \lambda \in \R,
\end{equation}
where $A^*$ is the Hermite-transpose of the complex matrix $A$; see (3.43) in \cite{MS}.
Put
\begin{equation} \label{eq:Rdef}
R(z) = P(z )^{-1} Q(z) \Sigma_L Q(-z)^\top (P(-z)^{-1})^\top, \ z \in \C.
\end{equation}
Note that the components of the matrix $P(z )^{-1} Q(z)$ are meromorphic functions,
which can have poles only at the zeros of $\det P(z)$. Due to our assumptions the poles 
have negative real part.

We frequently use the following simple facts about residues.
If $h(z)$ is a meromorphic function and $\rho$ is a closed curve, which encircles the
poles of $h$, then its residue at infinity is defined as
\[
\Res (h(z), \infty) = - \frac{1}{2 \pi \i} \int_{\rho} h(z) \d z,
\]
moreover, it can be computed as
\[
\Res (h(z), \infty) = - \Res (z^{-2} h(z^{-1}) , 0).
\]
For a matrix $M(z)$ with rational function
entries let $\deg M(z)_{i,j}$ denote the degree of the numerator minus the degree of the 
denominator in the $(i,j)$-th element of $M$. Finally, for a square matrix $A$ its 
adjugate is the transpose of its cofactor matrix, that is $\adj A = C^\top$, where
$C_{i,j}= (-1)^{i+j} A_{i,j}$, and $A_{i,j}$ is the 
determinant of the matrix that results from deleting row $i$ and column $j$ of $A$.
When $A$ is non-singular then $A^{-1} = \adj A / \det A$.

From now on, let us fix a closed curve $\rho \subset (-\infty, 0) \times \i \R$ 
in the left half complex plane, which contains the poles of $R(z)$ (the zeros of
$\det P(z)$). For any non-negative integer $k$ introduce the notation
\begin{equation} \label{eq:defTheta}
\int_{\rho \cup -\rho} z^{k} R(z) \d z  =
- 2 \pi \i \, \Res \! \left( z^{k} R(z), \infty \right) =: 2 \pi \i \Theta_k.
\end{equation}
Since $P^{-1}(z) = \frac{\adj P(z)}{\det P(z)}$, we have
\[
R(z) = \frac{1}{\det P(z) \det P(-z)} \adj P(z) Q(z) \Sigma_L (\adj P(-z) Q(-z))^\top.
\]
Note that $\deg (\adj P(z)) \leq  (d-1)p$ and $\deg Q(z) \leq q$, thus we may write
\begin{equation} \label{eq:S-pol}
(\adj P(z) ) Q(z) = \sum_{j=0}^{(d-1)p +q} S_j z^j,
\end{equation}
where $S_j \in M_d$, $j =0,1, \ldots, (d-1)p +q$. Since the polynomials on the main 
diagonal of $\adj P(z)$ are of degree 
$(d-1) p$, otherwise the degrees are strictly less than $(d-1)p$, we obtain that 
$S_{(d-1)p +q} = B_0$. With this notation
\begin{equation} \label{eq:R-pol}
S(z) = 
\adj P(z) Q(z) \Sigma_L (\adj P(-z) Q(-z))^\top
= \sum_{j=0}^{2[(d-1)p+q]} \widetilde S_j z^j,
\end{equation}
where, the first coefficients are
\[
\begin{split}
&\widetilde S_{2[(d-1)p+q]}  = (-1)^{(d-1)p + q} B_0 \Sigma_L B_0^\top, \\
&\widetilde S_{2[(d-1)p+q]-1} =
(-1)^{(d-1)p + q-1} \left( B_0 \Sigma_L S_{(d-1)p + q-1}^\top
- S_{(d-1)p + q-1} \Sigma_L B_0^\top \right).
\end{split} 
\]
From (\ref{eq:S-pol}) we see that for $j = 0,1,\ldots$
\[
\widetilde S_{2j}^\top =\widetilde S_{2j}\ \text{and } \ 
\widetilde S_{2j+1}^\top = - \widetilde S_{2j+1}.
\]
In the latter matrices the main diagonal is 0. In particular, they are 0 in
the one-dimensional case.

From (\ref{eq:R-pol}) we see that each component of the matrix $z^k R(z)$ decreases at 
least as $z^{-2}$
for $k \leq 2(p-q) -2$, thus from the definition of the residue at infinity follows
$\Theta_k = 0$. For $k = 2(p-q) -1$
it is easy to determine the limit matrix from the definition of $R(z)$,
which is $\Theta_{2(p-q)-1} = (-1)^{p-q} B_0 \Sigma_L B_0^\top$.
To determine further coefficients note that
\begin{equation} \label{eq:Theta-R}
\sum_{k=0}^\infty \Theta_k z^k = \frac{R(z^{-1})}{z}.
\end{equation}
As $\det P(z) = \prod_{j=1}^{pd} (z- \lambda_j)$ we obtain
\[
z^{-1} R(z^{-1}) = \frac{(-1)^{pd} z^{2pd - 1} }{\prod_{j=1}^{pd} (1- \lambda_j^2 z^2)}
\sum_{j=0}^{2[(d-1)p+q]} \widetilde S_j z^{-j}.
\]
This formula implies a linear recursion for the coefficients $\Theta_k$, which in a 
special case is spelled out in Section \ref{sect:example}.

Let us define the coefficients $\widetilde c_k(\omega)$ for $\omega \neq 0$,
$\omega \in (-\pi, \pi)$, via the series expansion
\begin{equation} \label{eq:def-ctilde}
\frac{1}{1 - e^{z + \i \omega} } = \sum_{k=0}^\infty \widetilde c_k(\omega) z^k,
\quad |z| < |\omega|.
\end{equation}
Let $f_\Delta$ denote the spectral density matrix of the sampled process
$Y_n^{\Delta} = Y(n\Delta)$. In the following we obtain a Taylor-expansion in $\Delta$ 
for the spectral density matrix.

\begin{proposition} \label{prop:spect1}
We have for $\omega \neq 0$, $\omega \in (-\pi, \pi)$
\[
f_\Delta (\omega) = -\frac{1}{2\pi} \sum_{k=2(p-q)-1}^\infty
(- \Delta)^k \Theta_k \widetilde c_k(\omega).
\]
\end{proposition}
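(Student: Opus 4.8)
The plan is to combine the classical aliasing (folding) identity for sampled spectral densities with a residue computation that turns the resulting infinite sum into the contour integral defining the $\Theta_k$, and then to expand the summation kernel in powers of $\Delta$.

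First I would record that, in the notation of (\ref{eq:Rdef}), the continuous-time spectral density is simply $f_Y(\lambda) = \frac{1}{2\pi} R(\i \lambda)$: since $P$ and $Q$ have real coefficients, $Q(-\i\lambda)^\top = Q(\i\lambda)^*$ and $(P(-\i\lambda)^{-1})^\top = (P(\i\lambda)^{-1})^*$, so (\ref{eq:Rdef}) reduces to (\ref{eq:spect-densY}). As the sampled autocovariances satisfy $\gamma_\Delta(n) = \gamma_Y(n\Delta)$, the standard folding argument (change variables $\omega = \Delta\lambda$ in $\gamma_Y(n\Delta) = \int_\R e^{\i n\Delta\lambda} f_Y(\lambda)\,\d\lambda$, split $\R$ into the intervals $((2k-1)\pi,(2k+1)\pi]$, and compare with $\gamma_\Delta(n) = \int_{-\pi}^\pi e^{\i n\omega} f_\Delta(\omega)\,\d\omega$) gives
\[
f_\Delta(\omega) = \frac{1}{\Delta}\sum_{k \in \Z} f_Y\!\left(\frac{\omega + 2\pi k}{\Delta}\right) = \frac{1}{2\pi\Delta}\sum_{k \in \Z} R\!\left(\frac{\i(\omega + 2\pi k)}{\Delta}\right).
\]
Because $R(z) = O(|z|^{-2(p-q)})$ by (\ref{eq:R-pol}) and $p > q$, this series converges absolutely.

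Next I would introduce the $2\pi\i/\Delta$-periodic kernel $\phi_\Delta(z) = (1 - e^{-\Delta z + \i\omega})^{-1}$, whose only singularities are simple poles at the aliasing nodes $z_k := \i(\omega + 2\pi k)/\Delta$ with residue $1/\Delta$, so that $\Res(R(z)\phi_\Delta(z), z_k) = R(z_k)/\Delta$ and the folded sum equals $\sum_k \Res(R\phi_\Delta, z_k)$. Integrating $R\phi_\Delta$ over circles of radius $r_N \to \infty$ chosen to pass midway between consecutive nodes, the integrand is $O(r_N^{-2(p-q)})$ while $\phi_\Delta$ stays bounded on these circles, so the integral is $O(r_N^{1-2(p-q)}) \to 0$. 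The residue theorem then forces the total residue sum to vanish, so
\[
\frac{1}{\Delta}\sum_{k \in \Z} R(z_k) = -\sum_{\text{poles of }R} \Res(R\phi_\Delta, \cdot) = -\frac{1}{2\pi\i}\int_{\rho \cup -\rho} R(z)\phi_\Delta(z)\,\d z,
\]
the last equality holding because $\rho \cup -\rho$ encircles exactly the poles of $R$ and none of the purely imaginary $z_k$. Combined with the folding formula this yields $f_\Delta(\omega) = -\frac{1}{2\pi}\cdot\frac{1}{2\pi\i}\int_{\rho\cup-\rho} R(z)\phi_\Delta(z)\,\d z$.

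It then remains to expand the kernel: setting $w = -\Delta z$ in (\ref{eq:def-ctilde}) gives $\phi_\Delta(z) = \sum_{k\ge 0}(-\Delta)^k \widetilde c_k(\omega) z^k$, valid for $|z| < |\omega|/\Delta$, hence uniformly on the fixed compact curve $\rho \cup -\rho$ once $\Delta$ is small enough, so summation and integration may be interchanged. Inserting the definition $\int_{\rho\cup-\rho} z^k R(z)\,\d z = 2\pi\i\,\Theta_k$ from (\ref{eq:defTheta}) term by term produces $f_\Delta(\omega) = -\frac{1}{2\pi}\sum_{k\ge 0}(-\Delta)^k \Theta_k \widetilde c_k(\omega)$, and the sum begins at $k = 2(p-q)-1$ because $\Theta_k = 0$ for all smaller $k$. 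I expect the main obstacle to be the residue bookkeeping of the third paragraph: one must legitimately sum over the infinitely many imaginary-axis poles, which is precisely where the decay $R = O(|z|^{-2(p-q)})$ with $p > q$ and the careful placement of the radii $r_N$ are needed; by comparison the folding identity and the final interchange are routine.
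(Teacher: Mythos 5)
Your proof is correct, but it reaches the key identity by a genuinely different route than the paper. The paper works in the time domain: it represents the autocovariance as a contour integral, $\Gamma(t) = \frac{1}{2\pi\i}\int_\rho e^{tz}R(z)\,\d z$, plugs this into the Fourier series $f_\Delta(\omega)=\frac{1}{2\pi}\sum_k e^{-\i k\omega}\Gamma(\Delta k)$, and sums the resulting geometric series inside the integral (legitimate since $\Re z<0$ on $\rho$), obtaining a two-kernel formula over $\rho$ involving both $R(z)$ and $R(z)^\top$; it then needs the parity relation $\widetilde d_k(\omega)=(-1)^{k+1}\widetilde c_k(\omega)$ between the two kernel expansions before the $\rho$ and $-\rho$ pieces can be merged into $\Theta_k$. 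You instead work in the frequency domain: the aliasing formula plus residue summation over the imaginary-axis poles of $(1-e^{-\Delta z+\i\omega})^{-1}$. In fact your single-kernel identity is exactly the paper's equation (\ref{eq:f-eq1}) in disguise, since $e^{\Delta z-\i\omega}/(1-e^{\Delta z-\i\omega})=-1/(1-e^{-\Delta z+\i\omega})$ and the $-\rho$ portion of your integral turns into the paper's $R(z)^\top$ term under $z\mapsto -z$; from there the expansion step is the same. What your route buys: the aliasing structure of sampling is explicit, only one kernel and one coefficient sequence $\widetilde c_k$ appear (no $\widetilde d_k$ bookkeeping), and you are more explicit than the paper that the term-by-term expansion requires $\rho\cup-\rho$ to lie in the disc $|z|<|\omega|/\Delta$, i.e.\ $\Delta$ small. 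What it costs: the summation-by-residues step over infinitely many poles, which needs both the decay $R(z)=O(|z|^{-2(p-q)})$ (available since $p>q$) and the uniform boundedness of the kernel on circles threaded midway between the nodes $z_k$ --- you correctly identify this as the crux, and your sketch of it (midway radii, $O(r_N^{1-2(p-q)})$ bound) is sound, but it should be written out if this were a full proof; the paper's geometric-series argument avoids this machinery entirely.
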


From the leading term, we obtain

\begin{corollary} \label{cor:1storder}
As $\Delta \downarrow 0$
\[
f_\Delta(\omega) = (-1)^{p-q} 
\frac{\Delta^{2(p-q) -1}(1 + O(\Delta))}{2 \pi} \widetilde c_{2(p-q)-1}(\omega) B_0 
\Sigma_L B_0^\top.
\]
In particular, the first order approximation is real.
\end{corollary}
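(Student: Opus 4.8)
The plan is to read the leading order directly off the series in Proposition~\ref{prop:spect1} and to treat the reality assertion, which is the only part needing an extra idea, separately.

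First I would isolate the summand of lowest order in $\Delta$. In
\[
f_\Delta(\omega) = -\frac{1}{2\pi}\sum_{k=2(p-q)-1}^\infty (-\Delta)^k\,\Theta_k\,\widetilde c_k(\omega)
\]
the term of smallest degree is $k = 2(p-q)-1$. Since this exponent is odd, $(-\Delta)^{2(p-q)-1} = -\Delta^{2(p-q)-1}$, and inserting the value $\Theta_{2(p-q)-1} = (-1)^{p-q}B_0\Sigma_L B_0^\top$ recorded before the statement turns the leading summand into $(-1)^{p-q}\frac{\Delta^{2(p-q)-1}}{2\pi}\widetilde c_{2(p-q)-1}(\omega)B_0\Sigma_L B_0^\top$. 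Factoring $(-\Delta)^{2(p-q)-1}$ out of the whole series leaves a remainder $\sum_{j\geq1}(-\Delta)^j\Theta_{2(p-q)-1+j}\,\widetilde c_{2(p-q)-1+j}(\omega)$. I would control this by noting that the $\Theta_k$ grow at most geometrically, being the Taylor coefficients of the rational function $z^{-1}R(z^{-1})$ in (\ref{eq:Theta-R}), so that $\limsup_k|\Theta_k|^{1/k}=\max_j|\lambda_j|$, while the radius of convergence $|\omega|$ in (\ref{eq:def-ctilde}) gives $\limsup_k|\widetilde c_k(\omega)|^{1/k}=|\omega|^{-1}$. Hence the series $\sum_k\Theta_k\widetilde c_k(\omega)\Delta^k$ converges for $\Delta<|\omega|/\max_j|\lambda_j|$, and its tail beyond the first term is $O(\Delta)$ relative to the leading coefficient, which produces the factor $(1+O(\Delta))$.

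The genuinely non-routine step is the reality claim, and I would establish it by showing that $\widetilde c_k(\omega)$ is real whenever $k$ is odd. Two symmetries of the generating function $h_\omega(z) = (1-e^{z+\i\omega})^{-1}$ suffice. Conjugating the defining series for real $z$ gives $\overline{\widetilde c_k(\omega)} = \widetilde c_k(-\omega)$. A short manipulation shows $h_\omega(-z) = 1 - h_{-\omega}(z)$, so comparing coefficients yields $(-1)^k\widetilde c_k(\omega) = -\widetilde c_k(-\omega)$ for $k\geq1$; in particular $\widetilde c_k$ is even in $\omega$ for odd $k$. Combining the two relations gives $\overline{\widetilde c_k(\omega)} = \widetilde c_k(\omega)$ for odd $k$, hence $\widetilde c_{2(p-q)-1}(\omega)\in\R$. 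Since $B_0\Sigma_L B_0^\top$ is a real symmetric matrix, the displayed first-order term is real, as claimed.

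I expect the only delicate point to be making the $O(\Delta)$ remainder estimate precise, i.e.\ quantifying the geometric growth of $\Theta_k$ against the decay of $\widetilde c_k(\omega)$ so that the tail of the series is uniformly controlled for small $\Delta$; the sign bookkeeping and the two coefficient symmetries are then routine.
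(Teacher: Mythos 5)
Your proof is correct, and it diverges from the paper's in two instructive ways. For the leading term you do exactly what the paper does: extract the $k=2(p-q)-1$ summand of Proposition~\ref{prop:spect1}, use the oddness of the exponent to fix the sign, and insert $\Theta_{2(p-q)-1}=(-1)^{p-q}B_0\Sigma_L B_0^\top$; the paper treats this as immediate ("from the leading term") and never justifies the $O(\Delta)$ remainder, whereas you supply the missing justification via the radius-of-convergence estimates $\limsup_k\|\Theta_k\|^{1/k}\le\max_j|\lambda_j|$ (Taylor coefficients of the rational function $z^{-1}R(z^{-1})$ in (\ref{eq:Theta-R})) and $\limsup_k|\widetilde c_k(\omega)|^{1/k}=|\omega|^{-1}$ from (\ref{eq:def-ctilde}) — a genuine, if small, added value, with the understood caveat (shared by the paper's statement) that the scalar factor $(1+O(\Delta))$ really means a matrix remainder of norm $O(\Delta^{2(p-q)})$, and that the estimate is pointwise in $\omega$, not uniform near $\omega=0$. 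Where you genuinely depart from the paper is the reality claim: the paper invokes Lemma~\ref{lemma:c}, whose proof runs through Eulerian polynomials and Chebyshev expansions, while you prove reality of $\widetilde c_k(\omega)$ for odd $k$ from two elementary symmetries of $h_\omega(z)=(1-e^{z+\i\omega})^{-1}$, namely $\overline{\widetilde c_k(\omega)}=\widetilde c_k(-\omega)$ and $h_\omega(-z)=1-h_{-\omega}(z)$; both identities check out, and the argument is self-contained and much lighter than Lemma~\ref{lemma:c}. It is worth noting that your symmetry argument is essentially equivalent to identity (\ref{eq:c-tildec}) established inside the paper's proof of Proposition~\ref{prop:spect1}, where the odd part of the generating function is identified as $-\tfrac12\sinh z/(\cosh z-\cos\omega)$, manifestly real for real $z$ and $\omega$; so the paper could also have cited that identity instead of the heavier lemma, and your write-up makes this shortcut explicit.
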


By Lemma \ref{lemma:c} below $\widetilde c_{2(p-q)-1}(\omega)$ is 
real, which implies that the first order approximation is real.

We also give another representation of the spectral density, from which the moving
average representation (\ref{eq:ma-repr}) below follows.

\begin{proposition} \label{prop:spect2}
Assume that $\lambda_1, \ldots, \lambda_{pd}$ are different zeros of $\det P(z)$,
i.e.~each has multiplicity one. Then 
\begin{equation} \label{eq:feq2}
f_\Delta(\omega)  
 = \frac{1}{4 \pi} \sum_{\ell=1}^{pd}
\frac{e^{-\i \omega} (\alpha(\ell) - \alpha(\ell)^\top) - e^{\Delta \lambda_\ell} 
\alpha(\ell) + e^{-\Delta \lambda_\ell} \alpha(\ell)^\top}
{\cosh  \Delta \lambda_\ell  - \cos \omega},
\end{equation} 
where the coefficient matrices $\alpha(\ell)  \in M_d$ come from the 
partial fraction decomposition of $R(z)$, i.e.
\begin{equation} \label{eq:pfd-s}
R(z)= 
\frac{S(z)}{\det P(z) \det P(-z)} = \sum_{\ell = 1}^{pd} 
\left( \frac{\alpha(\ell)}{z - \lambda_\ell} 
+ \frac{\beta(\ell)}{-z - \lambda_\ell} \right).
\end{equation}
\end{proposition}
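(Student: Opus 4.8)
The plan is to route the computation through the autocovariance function of the continuous process. Sampling commutes with taking autocovariances, $\gamma_\Delta(n)=\gamma_Y(n\Delta)$, where, with the convention fixed by (\ref{eq:spect-densY}), $\gamma_Y(h)=\int_{-\infty}^\infty e^{\i h\lambda}f_Y(\lambda)\,\d\lambda=\tfrac{1}{2\pi}\int_{-\infty}^\infty e^{\i h\lambda}R(\i\lambda)\,\d\lambda$ (using $f_Y(\lambda)=\tfrac{1}{2\pi}R(\i\lambda)$, which is (\ref{eq:spect-densY}) rewritten via (\ref{eq:Rdef}) since $P,Q$ have real coefficients). Consequently the spectral density of the sampled process is the discrete Fourier series $f_\Delta(\omega)=\tfrac{1}{2\pi}\sum_{n\in\Z}\gamma_\Delta(n)e^{-\i n\omega}$. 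The strategy is to evaluate $\gamma_Y$ in closed form from the partial fraction decomposition (\ref{eq:pfd-s}), substitute $h=n\Delta$, and resum the resulting geometric series. As a preliminary I would record that (\ref{eq:Rdef}) together with $\Sigma_L=\Sigma_L^\top$ gives $R(-z)=R(z)^\top$; comparing this with (\ref{eq:pfd-s}) and invoking uniqueness of the partial fraction expansion (legitimate since the $2pd$ poles $\pm\lambda_\ell$ are simple and distinct, $\Re\lambda_\ell<0<\Re(-\lambda_\ell)$) forces $\beta(\ell)=\alpha(\ell)^\top$ for every $\ell$.

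The core computation is a residue evaluation of $\gamma_Y$. Substituting $z=\i\lambda$ turns the integral into $\gamma_Y(h)=\tfrac{-\i}{2\pi}\int_{-\i\infty}^{\i\infty}e^{hz}R(z)\,\d z$ along the imaginary axis, on which $R$ has no poles. Since $R(z)=O(|z|^{-2})$ at infinity, for $h>0$ I close the contour in the left half-plane and pick up only the poles $z=\lambda_\ell$, obtaining $\gamma_Y(h)=\sum_{\ell=1}^{pd}\alpha(\ell)e^{h\lambda_\ell}$; for $h<0$ I close to the right, collect the poles $z=-\lambda_\ell$, and obtain $\gamma_Y(h)=\sum_{\ell=1}^{pd}\beta(\ell)e^{-h\lambda_\ell}=\sum_{\ell=1}^{pd}\alpha(\ell)^\top e^{-h\lambda_\ell}$. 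The decay $R(z)=O(|z|^{-2})$ together with $\Re\lambda_\ell<0\neq0$ is exactly what makes Jordan's lemma applicable and keeps the poles off the axis.

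Putting $h=n\Delta$ and summing the two geometric series over $n\ge1$ and $n\le-1$ (convergent because $|e^{\Delta\lambda_\ell}|=e^{\Delta\Re\lambda_\ell}<1$), then adding the lag-zero term $\gamma_\Delta(0)=\sum_\ell\alpha(\ell)$ and using $1+x/(1-x)=1/(1-x)$, yields
\[
2\pi f_\Delta(\omega)=\sum_{\ell=1}^{pd}\left(\frac{\alpha(\ell)}{1-e^{\Delta\lambda_\ell-\i\omega}}
+\frac{\alpha(\ell)^\top e^{\Delta\lambda_\ell+\i\omega}}{1-e^{\Delta\lambda_\ell+\i\omega}}\right).
\]
Abbreviating $U=e^{\Delta\lambda_\ell}$, $W=e^{\i\omega}$ and using $\cosh(\Delta\lambda_\ell)-\cos\omega=(U-W)(UW-1)/(2UW)$, an elementary manipulation shows that the $\ell$-th summand above equals one half of the $\ell$-th summand of (\ref{eq:feq2}) plus the additive matrix $\alpha(\ell)-\alpha(\ell)^\top$.

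The only nonroutine point---the main obstacle---is to dispose of the leftover $\sum_\ell(\alpha(\ell)-\alpha(\ell)^\top)$. This is precisely the sum of all finite residues of $R$: the residue at $\lambda_\ell$ is $\alpha(\ell)$ and at $-\lambda_\ell$ is $-\beta(\ell)=-\alpha(\ell)^\top$. By the residue theorem this sum equals $-\Res(R(z),\infty)=\Theta_0$, which vanishes because $R(z)=O(|z|^{-2})$, i.e.\ $\Theta_k=0$ for $k\le2(p-q)-2$. Hence the spurious matrix cancels upon summation over $\ell$; the surviving $\tfrac12\cdot\tfrac1{2\pi}$ matches the prefactor $\tfrac1{4\pi}$ of (\ref{eq:feq2}), and the remaining terms assemble exactly into (\ref{eq:feq2}). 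Apart from this cancellation everything is bookkeeping; the places that demand care are the orientation of the contours in the residue step and the convergence of the geometric series.
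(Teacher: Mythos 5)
Your proof is correct, and it is built from the same ingredients as the paper's: the partial fraction decomposition (\ref{eq:pfd-s}), the symmetry $R(-z)=R(z)^\top$ forcing $\beta(\ell)=\alpha(\ell)^\top$, residue calculus, and summation of geometric series in the lag variable. The organizational difference is that you perform these steps in the opposite order, and this costs you one extra step that the paper never has to make. The paper writes $\Gamma(t)=\frac{1}{2\pi\i}\int_\rho e^{tz}R(z)\,\d z$, sums the geometric series \emph{under} the contour integral to reach (\ref{eq:f-eq1}) with the kernels $1/(1-e^{\Delta z+\i\omega})$ and $e^{\Delta z-\i\omega}/(1-e^{\Delta z-\i\omega})$, and only then inserts (\ref{eq:pfd-s}) and takes residues; you instead take residues first, obtaining the closed form $\gamma_Y(h)=\sum_\ell\alpha(\ell)e^{h\lambda_\ell}$ for $h>0$, and resum afterwards. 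The two orders are equivalent, but your bookkeeping attaches the lag-zero term $\gamma_\Delta(0)=\sum_\ell\alpha(\ell)$ to the positive lags, whereas the paper's derivation of (\ref{eq:f-eq1}) attaches $\Gamma(0)^\top=\sum_\ell\alpha(\ell)^\top$ to the nonpositive lags; as a result your $\ell$-th summand differs from that of (\ref{eq:feq2}) by the additive matrix $\alpha(\ell)-\alpha(\ell)^\top$, while the paper's grouping reproduces (\ref{eq:feq2}) termwise with no remainder, which is why its final step is only a ``simple manipulation.'' You dispose of the leftover correctly: $\sum_\ell(\alpha(\ell)-\alpha(\ell)^\top)$ is the sum of all finite residues of $R$, hence equals $\Theta_0=-\Res(R(z),\infty)=0$ since $R(z)=O(|z|^{-2})$ at infinity (this uses $p\geq q+1$, which holds by the definition of an MCARMA process). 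It is worth noting that this vanishing identity is exactly the fact the paper itself invokes in Section \ref{sect:ma-repr} to show that the moving average order of the filtered process is strictly less than $pd$, so your argument stays entirely within the paper's circle of ideas --- it simply pays the cancellation cost at a different point of the computation.
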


The further assumption on the multiplicity of the zeros is not necessary but it makes
the formulas simpler. The following proposition gives the spectral density in the general 
case. Note that, as an abuse of notation, now $\lambda_1, \ldots, \lambda_m$ are the 
\textit{different} zeros of $\det P(z)$.

\begin{proposition} \label{prop:spect2-gen}
Let $\lambda_1, \ldots, \lambda_m$ be the different zeros of $\det P(z)$ with
multiplicity $\nu_1, \ldots, \nu_m$. Then 
\begin{equation} \label{eq:feq2-gen}
\begin{split}
f_\Delta(\omega)  
& = \frac{1}{4 \pi} \sum_{\ell=1}^{m}
\frac{e^{-\i \omega} (\alpha(\ell,1) - \alpha(\ell,1)^\top) - e^{\Delta \lambda_\ell} 
\alpha(\ell,1) + e^{-\Delta \lambda_\ell} \alpha(\ell,1)^\top}
{\cosh  \Delta \lambda_\ell  - \cos \omega} \\
& \phantom{=} \,
+ \sum_{\ell=1}^m \sum_{j=2}^{\nu_\ell} \frac{s_j^{\Delta}(\omega, \lambda_\ell)
\alpha(\ell, j)^\top + s_j^{\Delta}(-\omega, \lambda_\ell) \alpha(\ell, j)}
{(\cosh  \Delta \lambda_\ell  - \cos \omega)^j},
\end{split}
\end{equation} 
where $s_j^\Delta(\omega, \lambda_\ell)$ are trigonometric polynomials of $\omega$ of
degree $j-1$, whose coefficients depend on $\Delta$ and $\lambda_\ell$, and
the coefficient matrices $\alpha(\ell,j) \in M_d$ come from the partial
fraction decomposition of $R(z)$, i.e.
\begin{equation} \label{eq:pfd-s-gen}
R(z)= 
\frac{S(z)}{\det P(z) \det P(-z)} = \sum_{\ell = 1}^{m} \sum_{j=1}^{\nu_\ell}
\left( \frac{\alpha(\ell, j)}{(z - \lambda_\ell)^j} 
+ \frac{\beta(\ell,j)}{(-z - \lambda_\ell)^j} \right).
\end{equation}
\end{proposition}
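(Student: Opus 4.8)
The plan is to recover $f_\Delta$ from the autocovariance function of the continuous-time process. Write $\gamma(h)=\int_{-\infty}^\infty e^{\i\lambda h}f_Y(\lambda)\,\d\lambda$ for the matrix autocovariance of $Y$; the sampled process has autocovariances $\gamma(n\Delta)$, so its spectral density is recovered by the Fourier series
\[
f_\Delta(\omega)=\frac{1}{2\pi}\sum_{n\in\Z}\gamma(n\Delta)\,e^{-\i n\omega},\qquad\omega\in(-\pi,\pi).
\]
By (\ref{eq:spect-densY}) and (\ref{eq:Rdef}) we have $f_Y(\lambda)=\frac{1}{2\pi}R(\i\lambda)$, so the substitution $z=\i\lambda$ turns $\gamma(h)$ into a contour integral of $e^{zh}R(z)$ along the imaginary axis. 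For $h>0$ the factor $e^{zh}$ decays in the left half-plane and $R(z)=O(|z|^{-2(p-q)})$ with $p>q$, so I can close the contour to the left and collect exactly the poles $\lambda_1,\dots,\lambda_m$ of $R$ (the zeros of $\det P$). This is where the partial fraction decomposition (\ref{eq:pfd-s-gen}) enters: only the $\alpha(\ell,j)$-terms contribute (the $\beta$-terms have poles $-\lambda_\ell$ in the right half-plane), and the residue of $e^{zh}\alpha(\ell,j)(z-\lambda_\ell)^{-j}$ is $\alpha(\ell,j)\frac{h^{j-1}}{(j-1)!}e^{\lambda_\ell h}$.

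This yields, for $h>0$,
\[
\gamma(h)=\sum_{\ell=1}^m\sum_{j=1}^{\nu_\ell}\alpha(\ell,j)\,\frac{h^{j-1}}{(j-1)!}\,e^{\lambda_\ell h},
\]
while for $h<0$ I use $\gamma(h)=\gamma(-h)^\top$, which gives the same expression with $\alpha(\ell,j)$ replaced by $\alpha(\ell,j)^\top$ and $\lambda_\ell$ by $-\lambda_\ell$. Mean-square continuity of $Y$ gives $\gamma(0)=\lim_{h\downarrow0}\gamma(h)=\sum_\ell\alpha(\ell,1)$, and comparison with the left limit shows this matrix is symmetric, a fact needed at the end. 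Substituting into the Fourier series and setting $a_\ell=e^{\Delta\lambda_\ell}$ (so $|a_\ell|<1$), the $n>0$ part contracts, for each $(\ell,j)$, to $\frac{\alpha(\ell,j)\Delta^{j-1}}{(j-1)!}\sum_{n\ge1}n^{j-1}(a_\ell e^{-\i\omega})^n$, and the $n<0$ part to the transposed sum with $e^{\i\omega}$.

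The series $\sum_{n\ge1}n^{j-1}x^n$ is a rational function of $x$ with denominator $(1-x)^j$, so for fixed $\ell$ the two one-sided sums carry denominators $(1-a_\ell e^{\mp\i\omega})^j$. The key identity
\[
(1-a_\ell e^{-\i\omega})(1-a_\ell e^{\i\omega})=1-2a_\ell\cos\omega+a_\ell^2=2a_\ell(\cosh\Delta\lambda_\ell-\cos\omega)
\]
lets me put everything over the common denominator $(\cosh\Delta\lambda_\ell-\cos\omega)^j$; the surviving numerator is a Laurent polynomial in $e^{\i\omega}$, which I collect into the trigonometric polynomials $s_j^\Delta(\pm\omega,\lambda_\ell)$ multiplying $\alpha(\ell,j)^\top$ and $\alpha(\ell,j)$, producing the second sum in (\ref{eq:feq2-gen}). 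For $j=1$ the same computation, together with the $n=0$ contribution $\frac{1}{2\pi}\gamma(0)$ distributed symmetrically as $\frac12(\alpha(\ell,1)+\alpha(\ell,1)^\top)$, reproduces the first sum in (\ref{eq:feq2-gen}); here each $\ell$ matches the claimed summand only up to the antisymmetric term $\frac12(\alpha(\ell,1)-\alpha(\ell,1)^\top)$, and these cancel after summing over $\ell$ precisely because $\sum_\ell\alpha(\ell,1)=\gamma(0)$ is symmetric. When all roots are simple this is exactly Proposition \ref{prop:spect2}.

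The main obstacle I expect is the bookkeeping for the higher-order poles: I must evaluate $\sum_{n\ge1}n^{j-1}x^n$ explicitly (via $(x\,\d/\d x)^{j-1}$ applied to $x/(1-x)$, giving Eulerian-type numerators), combine the $n>0$ and $n<0$ contributions, and verify that after clearing to the denominator $(\cosh\Delta\lambda_\ell-\cos\omega)^j$ the numerator is a trigonometric polynomial of degree \emph{exactly} $j-1$ — the drop from the naive degree requires showing that the top-order terms in $e^{\i\omega}$ cancel. Secondary care is needed to justify closing the contour (the decay $R(z)=O(|z|^{-2(p-q)})$) and the interchange of summation with the limit $h\downarrow0$ defining the $n=0$ term.
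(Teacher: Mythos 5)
Your proof is correct, and it shares the paper's overall skeleton --- Fourier inversion of the sampled autocovariances, contour integration at the zeros of $\det P$, the partial fraction decomposition (\ref{eq:pfd-s-gen}), and the identity $(1-e^{\Delta\lambda}e^{-\i\omega})(1-e^{\Delta\lambda}e^{\i\omega}) = 2e^{\Delta\lambda}(\cosh\Delta\lambda-\cos\omega)$ --- but it interchanges the two main steps. The paper first sums the geometric series $\sum_k e^{k(\Delta z \mp \i\omega)}$ inside the contour integral, obtaining the kernel representation (\ref{eq:f-eq1}), and only then extracts residues; the higher-order poles then require the $(j-1)$-st derivatives of $1/(1-e^{\Delta z - \i\omega})$ at $\lambda_\ell$, which are shown to be of the form $s_j^\Delta(\omega,\lambda_\ell)/(\cosh\Delta\lambda_\ell-\cos\omega)^j$. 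You instead extract residues first, which yields the explicit autocovariance $\gamma(h)=\sum_{\ell,j}\alpha(\ell,j)\,h^{j-1}e^{\lambda_\ell h}/(j-1)!$ for $h>0$ --- a formula the paper never writes down, and a nice by-product of your route --- and then sum the series in $n$, so your replacement for the paper's derivative computation is the closed form of $\sum_{n\ge1}n^{j-1}x^n$ with denominator $(1-x)^j$; both computations rest on the same Eulerian-polynomial calculus that the paper uses in its proof of Lemma \ref{lemma:c}. Two remarks. First, your symmetric splitting of the $n=0$ term as $\tfrac12\sum_\ell(\alpha(\ell,1)+\alpha(\ell,1)^\top)$ forces the extra cancellation argument via symmetry of $\gamma(0)$; the paper instead places $\Gamma(0)$ entirely in the $k\le 0$ sum (i.e., pairs $\frac{e^{\Delta\lambda_\ell-\i\omega}}{1-e^{\Delta\lambda_\ell-\i\omega}}\alpha(\ell,1)$ with $\frac{1}{1-e^{\Delta\lambda_\ell+\i\omega}}\alpha(\ell,1)^\top$), after which each $\ell$-summand matches (\ref{eq:feq2-gen}) exactly, with no cancellation across $\ell$ needed; your version is correct but slightly longer. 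Second, the ``main obstacle'' you anticipate is not actually there: with $x=e^{\Delta\lambda_\ell}e^{-\i\omega}$, the numerator $x A_{j-1}(x)$ carries only the exponents $e^{-\i\omega},\ldots,e^{-(j-1)\i\omega}$, while the clearing factor $(1-e^{\Delta\lambda_\ell}e^{\i\omega})^j$ carries exponents $1,\ldots,e^{\i j\omega}$, so every term in the product has exponent between $-(j-1)$ and $j-1$; the trigonometric degree bound $j-1$ holds automatically and no cancellation of top-order terms is required.
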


\section{Moving average representation} \label{sect:ma-repr}

Recall that $\lambda_1, \ldots, \lambda_{pd}$ denote the zeros of $\det P(z)$. Define the 
polynomial
\[
\Phi^{\Delta}(z) = \prod_{i=1}^{pd} \left( 1 - e^{\Delta \lambda_i} z \right),
\]
and consider the filtered process
\begin{equation} \label{eq:defX}
X_n^\Delta = \Phi^\Delta(B) Y_n^\Delta,
\end{equation}
where $B$ is the backshift operator.
Note that, whenever $\lambda_i$ is complex, its complex conjugate is also a root of
$\det P(z)$, thus
the polynomial $\Phi^\Delta$ has real coefficients.
The power transfer function (see Theorem 4.4.1 in Brockwell and Davis 
\cite{BrockwellDavis})
of the filter $\Phi^\Delta$ is 
\begin{equation} \label{eq:def-phi}
\phi^\Delta(\omega) = 
\left| \prod_{i=1}^{pd} \left( 1 - e^{\Delta \lambda_i+ \i \omega}  \right) \right|^2=
2^{pd} e^{\Delta \sum_{j=1}^{pd} \lambda_j}
\prod_{j=1}^{pd} (\cosh \Delta \lambda_j - \cos \omega).
\end{equation}
When the zeros of $\det P(z)$ have multiplicity one, 
from (\ref{eq:feq2}) and (\ref{eq:def-phi}) we see that the spectral density of the 
filtered process $X^\Delta$ is
\begin{equation} \label{eq:Xd-f}
\begin{split}
f_{\textrm{\tiny{MA}}}^\Delta (\omega) & =  
\frac{2^{pd} e^{\Delta \sum_{j=1}^{pd} \lambda_j}}{4 \pi} \prod_{j=1}^{pd} (\cosh \Delta 
\lambda_j - \cos \omega)
\sum_{\ell =1}^{pd}
\frac{e^{-\i \omega} (\alpha(\ell) - \alpha(\ell)^\top) - e^{\Delta \lambda_\ell} 
\alpha(\ell) + e^{-\Delta \lambda_\ell} \alpha(\ell)^\top}
{\cosh  \Delta \lambda_\ell  - \cos \omega} \\
& = \frac{2^{pd} e^{\Delta \sum_{j=1}^{pd} \lambda_j}}{4 \pi} 
\sum_{\ell = 1}^{pd} \prod_{j \neq \ell}  (\cosh \Delta \lambda_j - \cos \omega)
\left[ e^{-\i \omega} (\alpha(\ell) - \alpha(\ell)^\top) - e^{\Delta \lambda_\ell} 
\alpha(\ell) + e^{-\Delta \lambda_\ell} \alpha(\ell)^\top \right] .
\end{split}
\end{equation}
This is clearly a trigonometric polynomial of degree less than or equal to $pd$. However, 
the coefficient of
$(\cos \omega)^{pd}$ and of $(\cos \omega)^{pd -1 } \sin \omega$ is a multiple of
\[
\sum_{\ell =1}^{pd} ( \alpha(\ell) - \alpha(\ell)^\top ). 
\]
Since in the partional fraction decomposition in (\ref{eq:pfd-s})
$\deg S(z) \leq 2((d-1) p +q)$, the sum above is necessarily 0.
Therefore $f_{\textrm{\tiny{MA}}}^\Delta (\omega)$ is a trigonometric polynomial of 
degree 
strictly less than $pd$. In the general case, when the zeros are not necessarily 
different, by (\ref{eq:feq2-gen}) we obtain a similar representation.
Thus, we have shown the following generalization of Theorem 4.2 by Schlemm and Stelzer 
\cite{SS}.

\begin{corollary} \label{corr:ma-repr}
For any $\Delta > 0$ the filtered process $X^\Delta$ is a $d$-dimensional moving average  
process of order less
than $pd$; i.e., there exist a matrix polynomial $\Psi^\Delta$ with degree less than 
$pd$ and a white noise sequence $Z^\Delta$, such that
\begin{equation} \label{eq:ma-repr}
X_n^\Delta = \Phi^\Delta(B) Y_n^\Delta = \Psi^\Delta(B) Z_n^\Delta.
\end{equation}
\end{corollary}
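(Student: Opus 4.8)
The plan is to read the statement as a spectral factorization result. Since $X^\Delta$ is stationary, its spectral density $f_{\textrm{\tiny{MA}}}^\Delta$ is Hermitian and nonnegative definite at every frequency $\omega$. The computation leading to (\ref{eq:Xd-f}), together with the cancellation $\sum_{\ell=1}^{pd}(\alpha(\ell)-\alpha(\ell)^\top)=0$ established just above, shows that $f_{\textrm{\tiny{MA}}}^\Delta$ is a matrix-valued trigonometric polynomial of degree at most $pd-1$. Writing $f_{\textrm{\tiny{MA}}}^\Delta(\omega)=\frac{1}{2\pi}\sum_{|k|\le pd-1}C_k^\Delta e^{-\i k\omega}$ with $C_k^\Delta=(C_{-k}^\Delta)^*$ the autocovariance matrices of $X^\Delta$, the degree bound is equivalent to $C_k^\Delta=0$ for $|k|\ge pd$.

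The core step is to factor this nonnegative-definite trigonometric polynomial. By the matrix version of the Fej\'er--Riesz theorem (matrix spectral factorization), any Hermitian nonnegative-definite matrix-valued trigonometric polynomial of degree $n$ can be written as $G(e^{-\i\omega})G(e^{-\i\omega})^*$ for some matrix polynomial $G(z)=\sum_{j=0}^n G_j z^j$ of degree $n$. Applying this with $n=pd-1$, I obtain a matrix polynomial $\Psi^\Delta$ of degree less than $pd$ such that $2\pi f_{\textrm{\tiny{MA}}}^\Delta(\omega)=\Psi^\Delta(e^{-\i\omega})(\Psi^\Delta(e^{-\i\omega}))^*$.

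Next I translate the factorization back to the time domain. Let $Z^\Delta=(Z_n^\Delta)_{n\in\Z}$ be a white noise sequence with covariance $I_d$. The filtered sequence $\Psi^\Delta(B)Z_n^\Delta$ is then stationary with spectral density $\frac{1}{2\pi}\Psi^\Delta(e^{-\i\omega})(\Psi^\Delta(e^{-\i\omega}))^*=f_{\textrm{\tiny{MA}}}^\Delta(\omega)$, i.e.~it has the same second-order structure as $X^\Delta$. Since throughout we deal only with second-order properties, this yields the representation (\ref{eq:ma-repr}) with $\deg\Psi^\Delta<pd$, as claimed. The general case of repeated zeros is handled identically: by (\ref{eq:feq2-gen}) the spectral density of $X^\Delta$ is again a matrix trigonometric polynomial whose degree drops below $pd$ by the same cancellation of the top-order coefficient, and the factorization applies verbatim.

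The main obstacle is the matrix spectral factorization itself: unlike the scalar Fej\'er--Riesz theorem, the matrix statement is genuinely non-commutative, so the existence of $\Psi^\Delta$ together with the bound on its degree must be invoked as a theorem rather than constructed by hand. A secondary point to verify carefully is that the degree of $f_{\textrm{\tiny{MA}}}^\Delta$ is strictly below $pd$ in the repeated-root case as well, which requires checking that the leading contributions in (\ref{eq:feq2-gen}) coming from the higher-order poles do not reinstate a $(\cos\omega)^{pd}$ term.
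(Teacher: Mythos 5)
Your proposal is correct and follows essentially the same route as the paper: the paper also proves the corollary by showing, via the cancellation $\sum_{\ell=1}^{pd}(\alpha(\ell)-\alpha(\ell)^\top)=0$, that $f_{\textrm{\tiny{MA}}}^\Delta$ is a matrix trigonometric polynomial of degree strictly less than $pd$, and then passes to the moving average representation by matrix spectral factorization (Hannan's Theorem 10$'$, i.e.\ the matrix Fej\'er--Riesz theorem), a step the paper leaves implicit and you invoke explicitly. The only point to tighten is your final step: taking an exogenous white noise with covariance $I_d$ produces a process with the same second-order structure as $X^\Delta$, not the asserted equality $X_n^\Delta=\Psi^\Delta(B)Z_n^\Delta$; to get the latter, $Z^\Delta$ should be constructed from $X^\Delta$ itself, e.g.\ by choosing the factor with $\det \Psi^\Delta(z)\neq 0$ in the closed unit disc (as Schlemm and Stelzer do) and setting $Z_n^\Delta=(\Psi^\Delta(B))^{-1}X_n^\Delta$, or directly through the spectral representation of $X^\Delta$.
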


Schlemm and Stelzer also showed that the matrix polynomial $\Psi^\Delta$ can be chosen 
such that
$\det \Psi^\Delta(z)$ has no zeros in the closed unit disc.
Note that in Theorem 4.2 in \cite{SS} it is assumed that the zeros of $\det P(z)$
are different, while the corollary above holds in general.
In the one-dimensional case this result was shown by Brockwell and Lindner
\cite[Lemma 2.1]{BL},
see also Proposition 3 by Brockwell et al.~\cite{BDY}.
Our aim in this paper is to obtain further information on the sequence $\Psi^\Delta$ as
$\Delta \to 0$.

In order to state the asymptotic result for the moving average process we need a lemma
about the coefficients $\widetilde c_k(\omega)$.

\begin{lemma} \label{lemma:c}
There exists
polynomials $q_{k-1}, r_{k-1}$ of degree $k-1$ with real coefficients such that
\begin{equation} \label{eq:def-rq}
\begin{split}
(2k-1)! \, [2(1-\cos \omega)]^{k} \, \widetilde c_{2k-1}(\omega)
& = (-1)^k \, q_{k-1}(\cos \omega), \\ 
(2k)! \, [2(1-\cos \omega)]^{k+1} \, \widetilde c_{2k}(\omega) 
& = (-1)^k \, \i \sin \omega \cdot r_{k-1}(\cos \omega).
\end{split}
\end{equation}
Moreover,
\[
q_{k-1} (x) = (-1)^{k-1} 2^{k-1} \prod_{j=1}^{k-1} (1 - x - \xi_{2k-1,j}),
\ \text{and } \ \prod_{j=1}^{k-1} \xi_{2k-1,j} =(2k-1)! 2^{-(k-1)},
\]
and
\[
r_{k-1}(x) =
(-1)^{k-1} 2^k \prod_{j=1}^{k-1} (1 - x - \xi_{2k,j}),
\ \text{and } \ \prod_{j=1}^{k-1} \xi_{2k,j} = (2k)! 2^{-k}.
\]
For the zeros of these polynomials $\xi_{2k-1, j}, \xi_{2k, j} \not\in (0,2)$,
$j=1,2,\ldots, k-1$. 
\end{lemma}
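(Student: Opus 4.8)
The plan is to derive a single closed-form partial-fraction representation for the coefficients $\widetilde c_k(\omega)$ and to read off every assertion of the lemma from it. Starting from $\frac{1}{1-e^u}=-\frac{1}{e^u-1}$ and the Mittag--Leffler expansion of $\frac{1}{e^u-1}$ (whose only singularities are simple poles at $u=2\pi\i n$ with residue $1$), I would expand each simple pole of $g(z)=(1-e^{z+\i\omega})^{-1}$ in a geometric series about $z=0$ and collect the coefficient of $z^k$. Since the pole nearest to $0$ lies at distance $|\omega|$, this is legitimate for $|z|<|\omega|$ and gives, for every $k\geq 1$,
\[
\widetilde c_k(\omega)=\i^{-(k+1)}\sum_{n\in\Z}\frac{1}{(2\pi n-\omega)^{k+1}} .
\]
(Equivalently one may note $g(z)=\tfrac12-\tfrac12\coth\!\bigl(\tfrac{z+\i\omega}{2}\bigr)$, whose coefficients satisfy the Riccati recursion coming from $\psi'=\tfrac12(1-\psi^2)$; I prefer the sum because it makes the remaining steps transparent.) The reality statements are then immediate: for the odd index $2k-1$ the prefactor $\i^{-2k}=(-1)^k$ is real and the summands are real, so $\widetilde c_{2k-1}(\omega)\in\R$; for the even index $2k$ the prefactor $\i^{-(2k+1)}=(-1)^{k+1}\i$ is purely imaginary, which is why a factor $\i$ (and, as will appear, $\i\sin\omega$) occurs.

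Next I would study the two real functions $S(\omega)=\sum_{n\in\Z}(2\pi n-\omega)^{-2k}$ and $T(\omega)=\sum_{n\in\Z}(2\pi n-\omega)^{-2k-1}$, so that $\widetilde c_{2k-1}(\omega)=(-1)^k S(\omega)$ and $\widetilde c_{2k}(\omega)=(-1)^{k+1}\i\,T(\omega)$. Using the classical identity $\pi\cot\pi x=\sum_n(x+n)^{-1}$ and differentiating, $S$ and $T$ become constant multiples of derivatives of $\pi\cot\pi x$ evaluated at $x=-\omega/(2\pi)$; with the elementary identity $2(1-\cos\omega)=4\sin^2(\omega/2)$, these are rational functions of $\cos\omega$ whose denominator is a power of $2(1-\cos\omega)$. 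A short induction on the order of differentiation shows that the numerators are polynomials in $\cos\omega$ of degree exactly $k-1$, that $T$ carries an overall factor $\sin\omega$, and that the leading coefficients are $(-1)^{k-1}2^{k-1}$ and $(-1)^{k-1}2^{k}$ respectively. This produces the polynomials $q_{k-1},r_{k-1}$ with real coefficients and the stated leading constants in the factored forms.

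The two remaining constants are pinned down by one limit and one sign argument. Evaluating the numerators at $\cos\omega=1$ is the same as extracting the leading singular behaviour as $\omega\downarrow 0$: since $1-e^{z+\i\omega}\sim-(z+\i\omega)$ near the origin, one obtains $\widetilde c_k(\omega)\sim(-1)^{k+1}(\i\omega)^{-(k+1)}$, and matching this against the lemma's expressions (using $2(1-\cos\omega)\sim\omega^2$) gives $q_{k-1}(1)=(2k-1)!$ and $r_{k-1}(1)=(2k)!$; rewriting these values through the factored forms yields exactly $\prod_j\xi_{2k-1,j}=(2k-1)!\,2^{-(k-1)}$ and $\prod_j\xi_{2k,j}=(2k)!\,2^{-k}$. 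Finally, the location of the zeros is equivalent to the non-vanishing of the numerators for $\cos\omega\in(-1,1)$, i.e.\ $\omega\in(-\pi,\pi)\setminus\{0\}$. For the odd index this is trivial, because $S(\omega)=\sum_n(2\pi n-\omega)^{-2k}>0$ termwise. For the even index I would use $T=\tfrac{1}{2k}S'$ together with the observation that $S$ is strictly convex on $(0,2\pi)$ (each term has positive second derivative) and symmetric about $\omega=\pi$; hence $S'<0$ on $(0,\pi)$, so $T\neq 0$ there, giving $r_{k-1}(\cos\omega)\neq 0$ and therefore $\xi_{2k,j}\notin(0,2)$.

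I expect the main obstacle to be the degree-and-leading-coefficient bookkeeping in the second paragraph: turning the successive derivatives of $\pi\cot\pi x$ into polynomials in $\cos\omega$ of the precise degree $k-1$ with the exact constants $(-1)^{k-1}2^{k-1}$ and $(-1)^{k-1}2^{k}$ is where the computation is most delicate, even though conceptually it is a routine induction. Everything else---reality, the factorial normalizations, and the exclusion of the interval $(0,2)$ for the zeros---follows cleanly from the partial-fraction representation and from the positivity and convexity of $S$.
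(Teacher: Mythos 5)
Your proposal is correct, and it takes a genuinely different route from the paper's proof. The paper works with the derivatives of $h(z)=(1-e^{z+\i\omega})^{-1}$ directly, showing $h^{(n)}(z)=yA_n(y)/(1-y)^{n+1}$ with $y=e^{z+\i\omega}$ and $A_n$ the Eulerian polynomials; the polynomial structure in $\cos\omega$ then comes from the symmetry $A(n,k)=A(n,n-1-k)$ combined with Chebyshev polynomials, the root products from $A_n(1)=n!$, and the exclusion $\xi_{2k-1,j},\xi_{2k,j}\notin(0,2)$ from Frobenius's theorem that Eulerian polynomials have only real roots. You instead start from the Mittag--Leffler expansion, which indeed gives
\[
\widetilde c_k(\omega)=\i^{-(k+1)}\sum_{n\in\Z}\frac{1}{(2\pi n-\omega)^{k+1}},
\qquad k\ge 1,
\]
so the reality of $\widetilde c_{2k-1}$ and of $\i^{-1}\widetilde c_{2k}$ is immediate; you recover the normalizations $q_{k-1}(1)=(2k-1)!$ and $r_{k-1}(1)=(2k)!$ (equivalently, the stated root products) from the singular behaviour as $\omega\downarrow 0$, a clean substitute for the paper's $A_n(1)=n!$; and your zero-location argument---termwise positivity of $S$ in the odd case, and $T=S'/(2k)<0$ on $(0,\pi)$ by strict convexity and symmetry about $\pi$ in the even case---is correct and is in fact more elementary and self-contained than the paper's appeal to Frobenius's real-rootedness theorem. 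That is a real gain of your approach.

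The one place where your write-up stops short of a proof is the step you yourself flag: that multiplication by $[2(1-\cos\omega)]^{k}$, resp.\ $[2(1-\cos\omega)]^{k+1}$, yields a polynomial in $\cos\omega$ of degree \emph{exactly} $k-1$ with leading coefficient $2^{k-1}$, resp.\ $\i\sin\omega$ times such a polynomial with leading coefficient $2^{k}$. This is precisely the content that the paper's Eulerian/Chebyshev computation supplies, so it cannot be waved away; a priori your substitution produces degree $k$, and one must see why the top coefficient cancels. The induction you invoke does go through: writing $(\d^m/\d\theta^m)\cot\theta=P_m(\cot\theta)$ with $P_{m+1}(c)=-(1+c^2)P_m'(c)$, one checks inductively that $1+c^2$ divides $P_m$ exactly once for all $m\ge 1$, because the cofactor $R_m=P_m/(1+c^2)$ satisfies $R_{m+1}(\pm\i)=\mp 2\i\,R_m(\pm\i)\neq 0$; substituting $\cot^2(\omega/2)=(1+\cos\omega)/(1-\cos\omega)$, this exact divisibility is what drops the degree from $k$ to $k-1$, and the explicit values $R_{2k-1}(\i)=-(-2\i)^{2k-2}$ then produce the stated leading coefficients (the parity of $P_{2k}$ likewise accounts for the overall $\sin\omega$ factor in the even case). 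With that induction written out, your argument is a complete, and in the zero-location step more elementary, proof of the lemma.
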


The first few polynomials and the numerical values of the corresponding roots are
\[
\begin{split}
q_0(x) & = 1, \\
q_1(x) & = 2 (x+ 2), \{ -2\}, \\
q_2(x) & = 4 (x^2 + 13 x + 16 ), \{ -11.623, -1.377 \}, \\
q_3(x) & = 8 (x^3 + 60 x^2 + 297 x + 272), \{ -54.657, -4.141, -1.202 \}, \\
q_4(x) & = 16 (x^4 + 251 x^3 + 3651 x^2 + 10841 x + 7936 ), \{-235.705, -11.59, 
-2.579, -1.126 \}, \\
q_5(x) & = 32 (x^5 + 1018 x^4 + 38158 x^3 + 274418 x^2 + 580013 x + 353792),\\
& \phantom{=} \ \{-979.322, -30.003, -5.615, -1.973, -1.087 \},
\end{split}
\]
\[
\begin{split}
r_0(x) & = 2, \\
r_1(x) & = 4(x+ 5), \{ -5 \}, \\
r_2(x) & = 8 (x^2 + 28 x + 61),  \{ -25.619, -2.381 \}, \\
r_3(x) & = 16 (x^3 + 123 x^2 + 1011 x + 1385 ),  \{ -114.258, -7.014, -1.728 \}, \\
r_4(x) & = 32 (x^4 + 506 x^3 + 11706 x^2  + 50666 x + 50521),   
\{-481.928, -18.784, -3.832, -1.457 \}, \\
r_5(x) & = 64 (x^5 + 2041 x^4 + 118546 x^3 + 1212146 x^2 + 3448901 x +  2702765), \\
 & \phantom{=} \ \{ -1981.48, -47.391, -8.116, -2.697, -1.315 \}.
\end{split}
\]
One sees that the polynomials have real roots, moreover, the roots have the 
interlacing property. However, we cannot prove this.
Since the zeros of an orthogonal polynomial sequence have this property (see
Chihara \cite[Theorem I.5.3]{Chi}) it is tempting to think that the polynomial sequences 
above are orthogonal. However, it is easy to check that the recursion in Favard's 
Theorem (\cite[Theorem I.4.4]{Chi}) does not hold even for the first terms, 
therefore, neither of the two sequences of polynomials is orthogonal with any weight 
function.

\medskip

For $\xi \in \C$ let us define $\eta(\xi) = 1 - \xi \pm \sqrt{\xi^2 - 2 \xi}$, where the 
sign is chosen so that $| \eta(\xi)| < 1$.
Now we can state the main result of the paper.

\begin{theorem} \label{th:marepr}
The moving average process $X_n^\Delta = \Psi^\Delta(B) Z_n^\Delta$ has the asymptotic 
form 
\[
X_n^\Delta = (I_d - I_d B)^{p(d-1)+q}  \prod_{j=1}^{p-q-1} ( 1 - \eta(\xi_{2(p-q)-1,j}) 
B) Z_n, 
\
Z_n \sim \mathrm{WN}(0, \Sigma_Z), \ \textrm{as } \Delta \downarrow 0,
\]
where
\[
\Sigma_Z = \frac{\Delta^{2(p-q)-1}}{(2(p-q)-1)! \prod_{j=1}^{p-q-1} 
|\eta(\xi_{2(p-q)-1,j})|} B_0 \Sigma_L B_0^\top. 
\]
\end{theorem}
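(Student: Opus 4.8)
The plan is to identify the limiting (scaled) spectral density of the filtered process and recognise it as the spectral density of the claimed moving average process. Since $\Phi^\Delta$ has real coefficients, the spectral density of $X^\Delta$ factorises as $f^\Delta_{\mathrm{MA}}(\omega) = \phi^\Delta(\omega) f_\Delta(\omega)$, with $\phi^\Delta$ the power transfer function in (\ref{eq:def-phi}). First I would fix $\omega \neq 0$ and let $\Delta \downarrow 0$: each factor $\cosh \Delta\lambda_j - \cos\omega \to 1-\cos\omega$ and $e^{\Delta \sum_j \lambda_j}\to 1$, so $\phi^\Delta(\omega)\to 2^{pd}(1-\cos\omega)^{pd}$. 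Combining this with the leading term of Corollary \ref{cor:1storder}, the scaled density $\Delta^{-(2(p-q)-1)} f^\Delta_{\mathrm{MA}}(\omega)$ converges pointwise to
\[
g(\omega) = \frac{(-1)^{p-q}}{2\pi}\,2^{pd}(1-\cos\omega)^{pd}\,\widetilde c_{2(p-q)-1}(\omega)\,B_0\Sigma_L B_0^\top .
\]

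Next I would rewrite $g$ using Lemma \ref{lemma:c} with $k=p-q$, which gives $\widetilde c_{2(p-q)-1}(\omega) = (-1)^{p-q} q_{p-q-1}(\cos\omega)\big/\big[(2(p-q)-1)!\,(2(1-\cos\omega))^{p-q}\big]$. Substituting and cancelling the powers of $2$ and of $1-\cos\omega$ (using $pd-(p-q)=p(d-1)+q$) yields the clean form
\[
g(\omega) = \frac{2^{p(d-1)+q}(1-\cos\omega)^{p(d-1)+q}}{2\pi(2(p-q)-1)!}\,q_{p-q-1}(\cos\omega)\,B_0\Sigma_L B_0^\top .
\]
The central algebraic step is then to recognise this as an MA spectral density $\tfrac{1}{2\pi}|\psi(e^{-\i\omega})|^2\,\widetilde\Sigma$. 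For real $\xi\notin[0,2]$ the root $\eta=\eta(\xi)$ is real, and from $(1-\eta)^2=-2\eta\xi$ one gets the identity $|1-\eta(\xi)e^{-\i\omega}|^2 = 2\eta(\xi)(1-\cos\omega-\xi)$. Using the explicit factorisation of $q_{p-q-1}$ from Lemma \ref{lemma:c} together with $|1-e^{-\i\omega}|^2=2(1-\cos\omega)$, I would obtain $g(\omega)=\frac{1}{2\pi}|\psi(e^{-\i\omega})|^2\widetilde\Sigma$ with $\psi(z)=(1-z)^{p(d-1)+q}\prod_{j=1}^{p-q-1}(1-\eta(\xi_{2(p-q)-1,j})z)$ and $\widetilde\Sigma=\frac{(-1)^{p-q-1}}{(2(p-q)-1)!\prod_j \eta(\xi_{2(p-q)-1,j})}B_0\Sigma_LB_0^\top$, so that $\Sigma_Z=\Delta^{2(p-q)-1}\widetilde\Sigma$.

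To match the stated sign of $\Sigma_Z$, I would use the product formula $\prod_j \xi_{2(p-q)-1,j}=(2(p-q)-1)!\,2^{-(p-q-1)}>0$ together with $\xi_j\notin[0,2]$: positivity forces an even number of negative $\xi_j$, while the sign analysis above shows $\eta(\xi_j)>0$ iff $\xi_j<0$ and $\eta(\xi_j)<0$ iff $\xi_j>2$. Hence the number of negative factors in $\prod_j\eta(\xi_j)$ has the parity of $p-q-1$, giving $(-1)^{p-q-1}/\prod_j\eta(\xi_j)=1/\prod_j|\eta(\xi_j)|$ and turning $\widetilde\Sigma$ into exactly the claimed expression.

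Finally I would upgrade convergence of the densities to convergence of the MA representation. For each $\Delta$ the process $X^\Delta$ is the invertible MA process of Corollary \ref{corr:ma-repr}, determined by the trigonometric polynomial $f^\Delta_{\mathrm{MA}}$ of degree $<pd$; evaluating the scaled densities at $2pd-1$ distinct frequencies $\omega\neq 0$ and inverting the Vandermonde system in $e^{\i\omega}$ upgrades the pointwise limit to convergence of the polynomial coefficients, hence of $\Psi^\Delta$ and $\Sigma_Z^\Delta$ to the factorisation found above. I expect the main obstacle to be precisely here: the limiting polynomial $\psi$ has roots at $z=1$ on the unit circle (the over-differencing introduced by $\Phi^\Delta$), so the limit process is \emph{non-invertible} and the spectral factorisation degenerates at $\omega=0$. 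One cannot simply invoke continuity of the canonical spectral factorisation, which fails on the circle; instead one must track the roots of $\Psi^\Delta$ that drift to the boundary and show they converge to this degenerate limit, which is the delicate technical point of the argument.
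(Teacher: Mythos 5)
Your proposal is correct and follows essentially the same route as the paper's own proof: the asymptotics $f^{\Delta}_{\textrm{\tiny{MA}}}(\omega) \sim \frac{\Delta^{2(p-q)-1}}{2\pi}\, 2^{pd}(1-\cos\omega)^{pd}\,\widetilde c_{2(p-q)-1}(\omega)\,\Theta_{2(p-q)-1}$ obtained from Corollary \ref{cor:1storder} and the power transfer function (\ref{eq:def-phi}), followed by the factorization of $[2(1-\cos\omega)]^{p-q}\,\widetilde c_{2(p-q)-1}(\omega)$ via Lemma \ref{lemma:c} and the identity $1-\cos\omega-\xi = \frac{1}{2\eta(\xi)}\,(1-\eta(\xi)e^{\i\omega})(1-\eta(\xi)e^{-\i\omega})$. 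If anything, you are more careful than the paper on two points it leaves implicit: the parity argument showing $(-1)^{p-q-1}\big/\prod_{j}\eta(\xi_{2(p-q)-1,j}) = 1\big/\prod_{j}|\eta(\xi_{2(p-q)-1,j})|$, and the question of upgrading pointwise convergence of the scaled spectral densities to convergence of the moving average coefficients (your Vandermonde/root-tracking remark), which the paper's proof does not address at all, treating the recognition of the limiting density in factored form as the end of the argument.
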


We note that $\eta(\xi_{2(p-q)-1,j})$ might be non-real (although we conjecture that they 
are all real valued),
in which case $\eta(\overline \xi_{2(p-q)-1,j})$ also appears in the product, which means 
that the coefficients
in the moving average expansion are real.

It is interesting to observe that up to the first order asymptotic the matrix moving 
average polynomial is in fact 
a scalar polynomial, and the covariance structure only appears in the covariance matrix 
$\Sigma_Z$ of the white noise. 
Thus Theorem \ref{th:marepr} has the same form as the first order version of Theorem 1 in 
\cite{BFK2};
see also Theorem 1 in \cite{BFK1}. Finally, we mention that the corresponding 
higher-order version of Theorem \ref{th:marepr},
the analog of Theorem 1 in \cite{BFK1}, seems hopeless to prove. The proof breaks down 
on 
the factorization of the 
spectral density of the moving average process, since for matrix spectral density no 
factorization holds in general;
compare Theorem 10 and 10' in Hannan \cite[Chapter II]{Hannan}.

\section{An example} \label{sect:example}

Let us consider the simplest possible non-univariate case. That is 
$p=1$, $q=0$, $d=2$. Then
\[
P(z) = I_d z + A_1, \ Q(z) = B_0, \quad A_1, B_0 \in M_2. 
\]
Moreover,
\[
\adj P(z) Q(z) = z B_0 + \adj A_1 B_0, 
\]
that is in formula (\ref{eq:S-pol}) $S_1 = B_0, S_0 = \adj A_1 B_0$. Furthermore, 
in (\ref{eq:R-pol}) we have
\begin{equation} \label{eq:R-form} 
\begin{split}
R(z) & = \frac{1}{\det P(z) \det P(-z)} ( z  S_1 + S_0 ) \Sigma_L (-z S_1^\top + 
S_0^\top) \\
& = \frac{1}{\det P(z) \det P(-z)} \left( z^2 \widetilde S_2 + z \widetilde S_1 + 
\widetilde S_0 \right),
\end{split}
\end{equation}
with
\begin{equation} \label{eq:Stilde}
\widetilde S_2 = - B_0 \Sigma_L B_0^\top, \ 
\widetilde S_1 = B_0 \Sigma_L B_0^\top (\adj A_1)^\top - \adj A_1 B_0 \Sigma_L B_0^\top, \
\widetilde S_0 = \adj A_1 B_0 \Sigma_L B_0^\top (\adj A_1)^\top.
\end{equation}
Assume that the zeros of $\det P(z)$ are different.
From (\ref{eq:R-form}) we can compute the matrices in the partional fraction 
decomposition. Standard
calculation gives that the matrices in Proposition \ref{prop:spect2} are
\begin{equation*}
\begin{split}
\alpha(1) & = \frac{1}{2 \lambda_1 (\lambda_1^2 - \lambda_2^2)}
( \widetilde S_2 \lambda_1^2 + \widetilde S_1 \lambda_1 + \widetilde S_0), \\
\alpha(2) & = \frac{-1}{2 \lambda_2 (\lambda_1^2 - \lambda_2^2)}
( \widetilde S_2 \lambda_2^2 + \widetilde S_1 \lambda_2 + \widetilde S_0) .
\end{split}
\end{equation*}
Then using formula (\ref{eq:Xd-f}), lengthy but straightforward calculation gives
\begin{equation*} \label{eq:f-specform}
\begin{split}
f_{\textrm{\tiny{MA}}}^\Delta(\omega) = \frac{2 e^{\Delta (\lambda_1+\lambda_2)}}{2 \pi 
(\lambda_1^2 - \lambda_2^2)}
\Bigg[ & \cos \omega \left(
\widetilde S_0 \left( \frac{\sinh \lambda_1 \Delta}{\lambda_1} -  \frac{\sinh \lambda_2 
\Delta}{\lambda_2} \right)
+ \widetilde S_2 (\lambda_1 \sinh \lambda_1 \Delta - \lambda_2 \sinh \lambda_2 \Delta) 
\right) \\
& \ + \i \sin \omega \cdot \widetilde S_1 (\cosh \lambda_1 \Delta - \cosh \lambda_2 
\Delta) \\
& \ + \widetilde S_0 \left( \frac{\cosh \lambda_1 \Delta \cdot \sinh \lambda_2 
\Delta}{\lambda_2} -
\frac{\cosh \lambda_2 \Delta \cdot \sinh \lambda_1 \Delta}{\lambda_1} \right) \\
& \ + \widetilde S_2 (\lambda_2 \cosh \lambda_1 \Delta \cdot \sinh \lambda_2 \Delta
- \lambda_1 \cosh \lambda_2 \Delta \cdot \sinh \lambda_1 \Delta)
\Bigg].
\end{split}
\end{equation*}
The corresponding process is MA(1), and according to Theorem 10' of Hannan \cite{Hannan} 
there is a
positive symmetric real matrix $\Psi_0$ and a real matrix $\Psi_1$, such that
\[
f_{\textrm{\tiny{MA}}}^\Delta(\omega) = \frac{1}{2 \pi} (\Psi_0 + \Psi_1 e^{\i 
\omega})(\Psi_0 + \Psi_1^\top e^{-\i \omega} ).
\]
After short calculation one sees that the first order expansion is $X_n \sim (I- B) Z_n$, 
with
covariance matrix $\Sigma_Z = \Delta B_0 \Sigma_L B_0^\top$, as we have shown in 
Theorem \ref{th:marepr}.
However, in general determining exactly the matrices $\Psi_0, \Psi_1$ is difficult.

We can also use Proposition \ref{prop:spect1}.
Combining (\ref{eq:R-form}) and (\ref{eq:Theta-R}) the $\Theta_k$ matrices can be 
calculated via the formula
\[
\sum_{k=1}^\infty \Theta_k z^k = \frac{z^3}{(1-\lambda_1^2 z^2) ( 1 - \lambda_2^2 z^2)}
\left( z^{-2} \widetilde S_2 + z^{-1} \widetilde S_1 + \widetilde S_0 \right).
\]
Multiplying by $(1-\lambda_1^2 z^2) ( 1 - \lambda_2^2 z^2)$ and equating the coefficients 
we obtain
\begin{equation} \label{eq:theta-recursion}
\begin{split}
\widetilde S_2 & =\Theta_1  \\
 \widetilde S_1 & = \Theta_2 \\
 \widetilde S_0 & = \Theta_3 - (\lambda_1^2 + \lambda_2^2) \Theta_1 \\
0 & = \Theta_4 - (\lambda_1^2 + \lambda_2^2) \Theta_2  \\
0 & = \Theta_k - (\lambda_1^2 + \lambda_2^2) \Theta_{k-2} + \lambda_1^2 \lambda_2^2 
\Theta_{k-4}, \ k \geq 5. 
\end{split}
\end{equation}
We note that also in the general case there exists a (more complicated) linear recursion 
for the $\Theta_k$ matrices.
Expanding $\cosh \Delta \lambda_i$ in a Taylor series, combining with Proposition 1,
we obtain 
\[
f_{\textrm{\tiny{MA}}}^\Delta(\omega) = - \frac{4 e^{\Delta (\lambda_1 + \lambda_2)}}
{2 \pi}\sum_{k=1}^\infty \Delta^k C_k(\omega),
\]
where $C_k(\omega)$ are trigonometric polynomials. Using the first few
values of the coefficient functions $\widetilde c_k(\omega)$ and 
(\ref{eq:theta-recursion})
\[
\begin{split}
C_1(\omega) & = (1-\cos \omega) \frac{\Theta_1}{2} \\
C_2(\omega) & = - \i \sin \omega  \frac{\Theta_2}{4} \\
C_3(\omega) & = \frac{1}{4} \left( \Theta_1 (\lambda_1^2 + \lambda_2^2 )- \Theta_3 
\frac{2 + \cos \omega}{3} \right) \\
C_4(\omega) & = - \frac{\i \sin \omega}{48} \Theta_4.
\end{split}
\]
Thus we may obtain a higher order expansion of the spectral density, e.g.
\[
\begin{split}
&f_{\textrm{\tiny{MA}}}^\Delta(\omega) = 
\frac{\Delta}{2 \pi} \bigg( -2 \widetilde S_2 (1 - \cos \omega) +
\Delta \left[ -2 (\lambda_1 + \lambda_2) \widetilde S_2 (1 - \cos \omega) + \i \sin 
\omega \, \widetilde S_1 \right] \\
& - \Delta^2 \bigg( (1 - \cos \omega ) \bigg[ \frac{\widetilde S_0 + \widetilde S_2 
(\lambda_1^2 + \lambda_2^2) }{3} 
+ \widetilde S_2 (\lambda_1 + \lambda_2)^2 \bigg] - \widetilde S_0
- \i \sin \omega \, \widetilde S_1 (\lambda_1 + \lambda_2) \bigg) + O(\Delta^3) \bigg),
\end{split}
\]
from which the statement of Theorem \ref{th:marepr} again follows. However,
it is not clear how to obtain higher order expansions for  the process itself.

\section{Proofs} \label{sect:proofs}

\begin{proof}[Proof of Proposition \ref{prop:spect1}.]
Let $\Gamma(t)$ denote the covariance matrix, i.e.~$\Gamma(t) = \E Y_0 Y_t^\top$. Then
\[
\Gamma(t) = \int_\R e^{\i t \lambda} f_Y(\lambda) \d \lambda, \quad t \in \R,
\]
and applying Cauchy's theorem componentwise we have for $t > 0$
\[
\Gamma(t) = \frac{1}{2 \pi \i} \int_\rho e^{tz} R(z) \d z,
\]
where $\rho \subset (-\infty, 0) \times \i \R$ is a closed curve, which encircles the 
zeros
of $\det P(z)$. Since $\Gamma$ is a covariance matrix, $\Gamma(-t) = \Gamma(t)^\top$.
It is clear that the autocovariance function of the discrete process
$(Y_{n\Delta})_{n \in \N}$ is $\Gamma(\Delta n)$, so by the inversion formula for 
discrete processes the spectral density can be calculated as
\begin{equation} \label{eq:f-eq1}
\begin{split}
f_\Delta(\omega) & = \frac{1}{2 \pi} \sum_{k= -\infty}^\infty e^{-\i k \omega } 
\Gamma(\Delta k) \\
& = \frac{1}{2 \pi} \left[ \sum_{k= -\infty}^0 e^{-\i k \omega } \Gamma(-\Delta k)^\top
+ \sum_{k= 1}^\infty e^{-\i k \omega } \Gamma(\Delta k) \right] \\
& = \frac{1}{4 \pi^2 \i} \left[ \int_\rho \sum_{k=0}^\infty e^{k (\Delta z + \i \omega)} 
R(z)^\top \d z
+ \int_\rho \sum_{k=1}^\infty e^{k (\Delta z - \i \omega)} R(z) \d z
\right] \\
& = \frac{1}{4 \pi^2 \i} \left[ \int_\rho \frac{1}{1 - e^{\Delta z + \i \omega}} 
R(z)^\top \d z
+ \int_\rho \frac{e^{\Delta z - \i \omega}}{1 - e^{\Delta z - \i \omega}} R(z) \d z
\right], \quad \omega \in (-\pi, \pi),
\end{split}
\end{equation}
where the change of the sum and integration is justified, since $\Re z < 0$ on the curve 
$\rho$.

Consider the Laurent-series
\[
\frac{1}{1 - e^{z + \i \omega} } = \sum_{k=-1}^\infty \widetilde c_k(\omega) z^k, \ 
\text{and } \ 
\frac{e^{z - \i \omega}}{1 - e^{z - \i \omega} } = \sum_{k=-1}^\infty \widetilde 
d_k(\omega) z^k.
\]
Note that either function has a pole at 0 only if $\omega = 0$, therefore for
$\omega \neq 0$ the series
above are usual Taylor-series. In the following we assume that $\omega \neq 0$. Adding 
the two expressions 
\[
\frac{1}{1 - e^{z + \i \omega} } + \frac{e^{z - \i \omega}}{1 - e^{z - \i \omega} }
= - \frac{e^z - e^{-z}}{e^z + e^{-z} - 2 \cos \omega} = - \frac{ \sinh z}{ \cosh z - \cos 
\omega},
\]
which is an odd function of $z$, therefore in the series expansion
\[
\widetilde c_{2k}(\omega) + \widetilde d_{2k}(\omega) = 0.
\]
On the other hand
\[
\frac{e^{z - \i \omega}}{1 - e^{z - \i \omega} } - \frac{1}{1 - e^{z + \i \omega} }  
= - \frac{e^z + e^{-z} - 2 e^{-\i \omega}}{e^z + e^{-z} - 2 \cos \omega} =
- \frac{ \cosh z - e^{-\i \omega}}{ \cosh z - \cos \omega}
= -1 - \i \frac{\sin \omega }{ \cosh z - \cos \omega}
\]
is an even function of $z$, therefore
\[ 
\widetilde d_{2k+1}(\omega) - \widetilde c_{2k+1}(\omega) = 0.
\] 
Summarizing, we obtain
\begin{equation} \label{eq:cd}
\widetilde d_k(\omega) = (-1)^{k+1} \widetilde c_k(\omega).
\end{equation}
For the coefficient $\widetilde c_k(\omega)$ we have
\begin{equation} \label{eq:c-tildec}
-\frac{1}{2} \frac{\sinh z}{\cosh z - \cos \omega} = \sum_{k=0}^\infty \widetilde 
c_{2k+1}(\omega) z^{2k+1},
\end{equation}
and
\[
\frac{1}{2}  + \frac{\i}{2} \frac{\sin \omega}{\cosh z - \cos \omega} = \sum_{k=0}^\infty 
\widetilde c_{2k}(\omega) z^{2k},
\]
so for $k \geq 1$ the coefficient $\widetilde c_{2k}(\omega)$ is purely imaginary.
From (\ref{eq:c-tildec}), using the notation of \cite{BFK1} formula (18) we see that
$\widetilde c_{2k+1} (\omega) = -c_k(\omega)/2$.

Inserting the series expansion into (\ref{eq:f-eq1}) and using that $R(z)^\top = R(-z)$ 
we obtain
\begin{equation} \label{eq:fdelta}
f_\Delta(\omega)
= \frac{1}{4 \pi^2 \i} \sum_{k=0}^\infty \Delta^k \left(
\widetilde c_k(\omega) \int_\rho  z^k R(-z) \d z + \widetilde d_k(\omega) \int_\rho z^k 
R(z) \d z \right).
\end{equation}
Then, changing the variables and using (\ref{eq:cd}) and (\ref{eq:defTheta}) we have
\begin{equation} \label{eq:c+d}
\begin{split}
& \widetilde c_k(\omega) \int_\rho  z^k R(-z) \d z + \widetilde d_k(\omega) \int_\rho z^k 
R(z) \d z \\
& = (-1)^{k+1} \widetilde c_k(\omega) \int_{- \rho}  z^k R( z) \d z + (-1)^{k+1} 
\widetilde c_k(\omega) \int_\rho z^k R(z) \d z \\
& = (-1)^{k+1} \widetilde c_k(\omega) \int_{\rho \cup - \rho} z^k R(z) \d z \\
& = (-1)^{k+1} \widetilde c_k(\omega) 2 \pi \i \Theta_k.
\end{split}
\end{equation}
Substituting into (\ref{eq:fdelta})
\[
f_\Delta (\omega) = \frac{-1}{2\pi} \sum_{k=0}^\infty  (- \Delta)^k \Theta_k \widetilde 
c_k(\omega).
\]
Taking into account that $\Theta_k = 0$ for $ k \leq 2(p-q) -2$, the proof is ready.
\end{proof}

\begin{proof}[Proof of Proposition \ref{prop:spect2}.]
We have
\[
R(z) = \frac{\adj P(z) Q(z) \Sigma_L (\adj P(-z) Q(-z))^\top}{\det P(z) \det P(-z)} 
= \frac{S(z)}{\det P(z) \det P(-z)}.
\]
Since the degree of the numerator is less than that of the denominator, for the partional 
fraction decomposition
(\ref{eq:pfd-s}) holds for some matrices $\alpha(\ell), \beta(\ell) \in M_d$.
Note that $S(-z)^\top = S(z)$  implies $\beta(\ell)^\top = \alpha (\ell)$.
By simple properties of the residue the second summand in (\ref{eq:f-eq1}) is
\[
\begin{split}
\frac{1}{2 \pi \i}  \int_\rho \frac{e^{\Delta z - \i \omega}}{1 - e^{\Delta z - \i 
\omega}} R(z) \d z
& = \sum_{\ell =1}^{pd} \frac{1}{2 \pi \i}  \int_\rho \frac{e^{\Delta z - \i \omega}}{1 - 
e^{\Delta z - \i \omega}} 
\left( \frac{\alpha(\ell)}{z - \lambda_\ell} +  \frac{\beta(\ell)}{-z - \lambda_\ell} 
\right) \d z \\
& = \sum_{\ell =1}^{pd} \frac{e^{\Delta \lambda_\ell - \i \omega}}{1 - e^{\Delta 
\lambda_\ell - \i \omega}} \alpha(\ell).
\end{split}
\]
In the same way
\[
\frac{1}{2 \pi \i}  \int_\rho \frac{R(z)^\top}{1 - e^{\Delta z + \i \omega}} \d z
= \sum_{\ell =1}^{pd} \frac{1}{1 - e^{\Delta \lambda_\ell + \i \omega}} \alpha(\ell)^\top.
\]
Therefore, by (\ref{eq:f-eq1})
\[
f_\Delta(\omega) = \frac{1}{2 \pi} \sum_{\ell =1}^{pd}
\left(  \frac{e^{\Delta \lambda_\ell - \i \omega}}{1 - e^{\Delta \lambda_\ell - \i 
\omega}} \alpha(\ell)
+ \frac{1}{1 - e^{\Delta \lambda_\ell + \i \omega}} \alpha(\ell)^\top \right),
\]
from which simple manipulation gives (\ref{eq:feq2}).
\end{proof}

\begin{proof}[Proof of Proposition \ref{prop:spect2-gen}.]
In the general case the partional fraction decomposition of $R(z)$
reads as (\ref{eq:pfd-s-gen}), with some matrices 
$\alpha(\ell, j), \beta(\ell,j) \in M_d$.
Similarly, as in the previous case we obtain
\[
\begin{split}
\frac{1}{2 \pi \i}  \int_\rho \frac{e^{\Delta z - \i \omega}}
{1 - e^{\Delta z - \i \omega}} R(z) \d z
& = \sum_{\ell =1}^{m}  \sum_{j=1}^{\nu_\ell} 
\left( D^{(j-1)} \frac{e^{\Delta z - \i \omega}}{1 - e^{\Delta z - \i \omega}} 
\right)_{\!\!z=\lambda_\ell}
\frac{\alpha(\ell,j)}{(j-1)!},
\end{split}
\]
where $D$ stands for differentiation. Noting that
\[
\frac{e^{\Delta z - \i \omega}}{1 - e^{\Delta z - \i \omega}} = -1 + 
\frac{1}{1 - e^{\Delta z - \i \omega}},
\]
one can show that for $j \geq 2$
\[
\left( D^{(j-1)} \frac{1}{1 - e^{\Delta z - \i \omega}} \right)_{\!\!z=\lambda_\ell}
= \frac{s^\Delta_j(\omega, \lambda_\ell)}{(\cosh \Delta \lambda_\ell - \cos \omega)^j},
\]
with $s_j^\Delta(\omega, \lambda_\ell)$ being a trigonometric polynomial in $\omega$ of 
degree $j-1$, whose coefficients depend on $\Delta$ and $\lambda_\ell$.
Similarly, for $j \geq 2$
\[
\left( D^{(j-1)} \frac{1}{1 - e^{\Delta z + \i \omega}} \right)_{\!\!z=\lambda_\ell}
= \frac{s^\Delta_j(-\omega, \lambda_\ell)}{(\cosh \Delta \lambda_\ell - \cos \omega)^j}.
\]
Substituting back into (\ref{eq:f-eq1}) we obtain (\ref{eq:feq2-gen}).
\end{proof}

\begin{proof}[Proof of Lemma \ref{lemma:c}.]
Recall definition (\ref{eq:def-ctilde}).
To ease the notation put $h(z) = 1/(1 - e^{z + \i \omega})$, and $y = e^{z+\i \omega}$. 
Then, for
the first few derivatives (all the derivatives are meant with respect to $z$)
\[
h'(z) = \frac{y}{(1-y)^2}, \
h''(z) = \frac{y^2 + y}{(1-y)^3}, \
h'''(z) = \frac{y^3 + 4 y^2 + y}{(1-y)^4}.
\]
In general, using induction it is easy to see that
\[
h^{(n)}(z) = \frac{y A_n(y)}{(1-y)^{n+1}}, \ n = 1,2, \ldots,
\]
where $A_n$ is a polynomial of degree $n-1$, for which the recursion
\begin{equation} \label{eq:A-recursion}
A_{n+1}(y) = (y - y^2) A_n'(y) + A_n(y) (ny + 1)
\end{equation}
holds. These are called \textit{Eulerian polynomials} (not to be confused with 
Euler-polynomials).
The coefficients are the Eulerian numbers, i.e.~$A_n(y) = A(n,n-1) y^{n-1} + A(n,n-2) 
y^{n-2} + \ldots + A(n,0)$.
The combinatorial interpretation of the
Eulerian numbers is that $A(n, k)$ is the number of permutations of $\{1,2,\ldots, n\}$ 
in which exactly $k$ elements are greater
than the previous element. From (\ref{eq:A-recursion}) we obtain
\[
A(n+1,k) = (k+1) A(n,k) + (n+1-k) A(n,k-1). 
\]
Induction gives that $A(n,n-1) = A(n,0) =1$, and
\begin{equation} \label{eq:A-symm}
 A(n,k) = A(n,n-1-k), \ k=0,1,\ldots, n-1,
\end{equation}
that is $A_n$ is a symmetric polynomial.

Since $(1 - e^{\i \omega})(1- e^{-\i \omega}) = 2 (1 - \cos \omega)$,
from (\ref{eq:def-ctilde}) we have
\[
n! \, \widetilde c_n(\omega) = h^{(n)}(0) = \frac{e^{\i \omega} A_n(e^{\i \omega})}{(1 - 
e^{\i \omega})^{n+1}}
= \frac{e^{\i \omega} A_n(e^{\i \omega}) (1 - e^{- \i \omega})^{n+1}}{ [2(1 - \cos 
\omega)]^{n+1}}.
\]
For odd indices, $n = 2k -1$, $k=1,2,\ldots$, using (\ref{eq:A-symm})
\[
\begin{split}
& A_{2k-1}(e^{\i \omega})  = A(2k-1,0) e^{(2k-2) \i \omega} +
A(2k-1,1) e^{(2k-3) \i \omega} + \ldots +  A(2k-1,1) e^{\i \omega} 
+  A(2k-1,0) \\
&=2 e^{(k-1) \i \omega} \left[ A(2k-1,0) \cos (k-1)\omega + A(2k-1,1) \cos (k-2) \omega +
\ldots + 2^{-1} A(2k-1,k-1) \right].
\end{split}
\]
The second factor is a polynomial of $\cos \omega$ of degree $k-1$. For the first factor
$e^{\i \omega} (1 - e^{-\i \omega})^2 = -2(1-\cos \omega)$, therefore
\begin{equation} \label{eq:h-poly-odd}
\begin{split}
&(2k-1)! \, \widetilde c_{2k-1}(\omega) = 
\frac{e^{\i \omega} A_{2k-1}(e^{\i \omega}) (1 - e^{- \i \omega})^{2k}}{ [2(1 - \cos 
\omega)]^{2k}} \\
& = \frac{2 (-1)^k \left[ A(2k-1,0) \cos (k-1)\omega + A(2k-1,1) \cos (k-2) \omega +
\ldots + 2^{-1} A(2k-1,k-1) \right]}{ [2(1 - \cos \omega)]^{k}}.
\end{split}
\end{equation}
For $n=2k$, $k=1,2,\ldots$, (\ref{eq:A-symm}) implies $A_{2k}(-1) =0$,
i.e.~$A_{2k}(y) = (1+y) \widetilde A_{2k-1}(y)$, where 
\[
\widetilde A_{2k-1}(y) = \widetilde A(2k-1, 0) y^{2k-2} + \widetilde A(2k-1, 1) y^{2k-3} 
+ \ldots +
\widetilde A(2k-1, 1) y + \widetilde A(2k-1,0) 
\]
is again a symmetric polynomial of degree $2k-2$. Thus, using the calculation above, and 
that
$(1 + e^{\i \omega})(1 - e^{-\i \omega}) = 2 \i \sin \omega$ we obtain
\begin{equation} \label{eq:h-poly-even}
\begin{split}
& (2k)! \, \widetilde c_{2k}(\omega) = 
\frac{e^{\i \omega} A_{2k}(e^{\i \omega}) (1 - e^{- \i \omega})^{2k+1}}{ [2(1 - \cos 
\omega)]^{2k+1}} \\
& = \frac{4 (-1)^k \i \sin \omega \left[ \! \widetilde A(2k-1,0) \cos (k-1)\omega + 
\widetilde A(2k-1,1) \cos (k-2) \omega +
\ldots + 2^{-1} \widetilde  A(2k-1,k-1) \! \right]}{ [2(1 - \cos \omega)]^{k+1}}.
\end{split}
\end{equation}
Apart from constant factors the statement is proved.

Expressing $\cos n \omega$ as a polynomial of $\cos \omega$ serves as a definition of the 
Chebishev polynomials $T_n$,
i.e.
\[
\cos n \omega = T_n (\cos \omega) = \frac{n}{2} \sum_{k=0}^{[n/2]} (-1)^k 
\frac{(n-k-1)!}{k! (n-2k)!} (2 \cos \omega)^{n-2k}.
\]
From this we see that the coefficient of $(\cos \omega )^n$ equals $2^{n-1}$. Thus the 
coefficient of
$(\cos \omega)^{k-1}$ on the right-hand side of (\ref{eq:h-poly-odd}) is $(-1)^k 
2^{k-1}$, from which we obtain
the value of the leading coefficient. After noting that $\widetilde A(2k-1,0) = A(2k,0) 
=1$, the value of the leading coefficients
follows similarly in the even case.
Finally, from (\ref{eq:A-recursion}) we see that $A_n(1) = n!$, from which the formula 
for the product of the roots
follows.

Thus we have shown that the polynomials $q_{k-1}, r_{k-1}$ defined via
\[
\begin{split}
q_{k-1}(\cos \omega) & = (-1)^k (2k-1)! [2(1 - \cos \omega)]^k \widetilde 
c_{2k-1}(\omega), \\ 
\i \sin \omega \, r_{k-1}(\cos \omega) & = (-1)^k (2k)! [2(1 - \cos \omega)]^{k+1} 
\widetilde c_{2k}(\omega),
\end{split}
\]
have the stated properties. From (\ref{eq:h-poly-odd}) we 
see that $q_{k-1}$ and $r_{k-1}$ are linear combinations
of Chebishev polynomials
\[
\begin{split}
q_{k-1}(x) & = 2 \left[ A(2k-1,0) T_{k-1}(x) + A(2k-1,1) T_{k-2}(x) + \ldots + 2^{-1} 
A(2k-1,k-1) \right], \\
r_{k-1}(x) & = 4 \left[ \widetilde A(2k-1,0) T_{k-1}(x) + \widetilde A(2k-1,1) T_{k-2}(x) 
+ \ldots + 2^{-1} \widetilde A(2k-1,k-1) \right].
\end{split}
\]

Now we turn to the statement about the roots. Indirectly assume that $q_{k-1}$ has a 
real root in $(-1,1)$, say $\cos \omega_0$. Then from (\ref{eq:h-poly-odd}) we see that
$A_{2k-1}(e^{\i \omega_0}) = 0$. This is a contradiction, since Frobenius showed in 
1910 that the roots of the Eulerian polynomials are real (for a recent work on roots 
of generalized Eulerian polynomials see Savage and Visontai \cite{SavVis}).
Similar reasoning shows that $r_{k-1}(x)$ has no real root in $(-1,1)$.
\end{proof}

\begin{proof}[Proof of Theorem \ref{th:marepr}.]
The proof relies on analyzing the spectral density 
$f_{\textrm{\tiny{MA}}}^\Delta (\omega)$ of the process $X_n^\Delta$.

As
\[
\cosh \Delta \lambda_j - \cos \omega = 1 - \cos \omega + \sum_{\ell=1}^\infty 
\frac{(\Delta \lambda_j)^{2 \ell}}{(2\ell)!},
\]
using Corollary \ref{cor:1storder} and (\ref{eq:def-phi})
the asymptotics of the spectral density of the moving average process
$\Phi^\Delta(B) Y_n^\Delta$ is
\begin{equation} \label{eq:fma-form1}
\begin{split}
f_{\textrm{\tiny{MA}}}^\Delta (\omega) & =
\frac{-1}{2\pi} 2^{pd} e^{\Delta \sum_{j=1}^{pd} \lambda_j} \prod_{j=1}^{pd} (\cosh 
\Delta \lambda_j - \cos \omega)
\sum_{k=0}^\infty  (- \Delta)^k \Theta_k \widetilde c_k(\omega) \\
& \sim \frac{\Delta^{2(p-q) -1}}{2 \pi} 2^{pd}  (1 - \cos \omega)^{pd} \widetilde 
c_{2(p-q)-1}(\omega) \Theta_{2(p-q)-1} 
\end{split}
\end{equation}
as $\Delta \downarrow 0$.
Write
\begin{equation} \label{eq:fma-form2}
f^\Delta_{\textrm{\tiny{MA}}}(\omega) \sim  
\frac{ \Delta^{2(p-q)-1}}{2\pi}
\left[ 2 (1- \cos \omega) \right]^{pd - (p-q)} 
[2(1-\cos \omega)]^{p-q} \widetilde c_{2(p-q)-1}(\omega)
\Theta_{2(p-q)-1}. 
\end{equation}
It is clear that in (\ref{eq:fma-form2}) the factor $[2(1- \cos \omega)]^{p(d-1)+q}$
corresponds to $(I_d - I_d B)^{p(d-1)+q}$ in the moving average representation.

For the factorization of the remaining terms we need that
\[
(1 - \eta e^{\i \omega} ) (1 - \eta e^{-\i \omega}) = 
2 \eta \left( 1 - \cos \omega + \frac{(1-\eta)^2}{2\eta} \right),
\]
thus solving the equation $- \xi = (1-\eta)^2/(2\eta)$ we have for the
solution
\[
\eta(\xi) := 1 - \xi \pm \sqrt{\xi^2- 2 \xi},
\]
where the sign is chosen so that $| \eta(\xi)| < 1$.
This is possible, since the product of the two roots is 1.
That is
\[
1 - \cos \omega - \xi = \frac{1}{2 \eta(\xi)} (1 - \eta(\xi) e^{\i \omega} )
(1 - \eta(\xi) e^{-\i \omega}).
\]
Therefore combining the above with Lemma \ref{lemma:c} we obtain
\[ 
\begin{split}
[2(1-\cos \omega)]^{p-q} \widetilde c_{2(p-q)-1}(\omega) & =
- 2^{p-q-1} \frac{\prod_{j=1}^{p-q-1} (1 - \cos \omega - \xi_{2(p-q)-1,j})}
{(2(p-q)-1)!} \\
& = - \frac{\prod_{j=1}^{p-q-1} (1 - \eta(\xi_{2(p-q)-1,j}) e^{\i \omega} )
(1 - \eta(\xi_{2(p-q)-1,j}) e^{-\i \omega} )}
{(2(p-q)-1)! \prod_{j=1}^{p-q-1} \eta(\xi_{2(p-q)-1,j})}.
\end{split}
\] 

We conjecture that the zeros $\xi_{2k-1,j}$ are all real and greater than 2.
This is true for $k=1,2,\ldots, 8$, however we cannot prove it in general.
For real zeros the $\eta$'s are also real (we did prove that
$\xi_{2k-1,j} \not\in (0,2)$), thus
in the factorization everything is real. When there is a non-real root $\xi$
then necessarily its conjugate $\overline \xi$ is also a root, and one can check easily 
that $\eta(\overline \xi) = \overline{\eta(\xi)}$, therefore in the factorization
the coefficients are real.
\end{proof}

\medskip
\noindent \textbf{Acknowledgement.}
I am grateful to Claudia Kl\"uppelberg and to Peter Brockwell for inspiring
conversations on the subject and for comments on the manuscript. I also thank B\'ela 
Nagy for discussions on the polynomials appearing in Lemma \ref{lemma:c}.

\bibliographystyle{abbrv}
\bibliography{MCMA}

\end{document}